\documentclass[reqno]{amsart}
\usepackage{latexsym}
\usepackage{amssymb}
\usepackage{amsmath}
\usepackage{amsfonts}
\usepackage{amssymb}
\usepackage{mathtools}
\usepackage{amsthm}
\usepackage{amsbsy}
\usepackage{amscd}
\usepackage{bm}
\usepackage[all]{xy}
\usepackage{epigraph}
\usepackage{xcolor}

\usepackage{color}
 

\newcommand{\set}[2]{\lbrace#1~:~#2\rbrace}
\newcommand{\dom}{\mathbf{dom}}
\newcommand{\lr}[1]{\langle#1\rangle}
\newcommand{\define}{\stackrel{\text{df}}{\Longleftrightarrow}}
\newcommand{\adopts}{\rightsquigarrow}
\newcommand{\adoptsM}{\adopts_{\mat{M}}}

\newcommand{\CF}{\mathbf{C}_{F}}
\newcommand{\CFs}{\mathbf{C}_{F^{\ast}}}

\newcommand{\C}{\mathbf{C}}
\newcommand{\Cn}{\mathbf{Cn}}

\newcommand{\CmatMrs}{\C_{\mat{M}}^{r\ast}}

\newcommand{\CcalMrs}{\C_{\mathcal{M}}^{r\ast}}

\newcommand{\modelM}{\models_{\mat{M}}}
\newcommand{\modelcalM}{\models_{\mathcal{M}}}
\newcommand{\modelMr}{\models^{r}_{\mat{M}}}
\newcommand{\modelcalMr}{\models^{r}_{\mathcal{M}}}
\newcommand{\modelMrs}{\models^{r\ast}_{\mat{M}}}
\newcommand{\modelcalMrs}{\models^{r\ast}_{\mathcal{M}}}

\newcommand{\snakeF}{\mid\!\sim_{F}}
\newcommand{\snakeFs}{\mid\!\sim_{F^{\ast}}}

\newcommand{\vdashC}{\vdash_{\C}}
\newcommand{\vdashL}[1]{\vdash_{#1}}
\newcommand{\VdashL}[1]{\Vdash_{#1}}
\newcommand{\sequentL}[1]{\!\Rightarrow_{#1}\!}


\newcommand{\Cl}{\textsf{Cl}}
\newcommand{\Int}{\textsf{Int}}

\newcommand{\Lan}{\mathcal{L}}
\newcommand{\LanF}{\mathcal{L}^{\circ}}

\newcommand{\V}{\mathcal{V}}
\newcommand{\Vs}[1]{\mathcal{V}^{\ast}_{\mat{#1}}}
\newcommand{\setCM}[1]{#1_{\mathbf{c}_{\mat{M}}}}
\newcommand{\Var}{\mathcal{V}}
\newcommand{\VarL}{\mathcal{V}_{\mathcal{L}}}
\newcommand{\VarF}{\mathcal{V}_{\mathcal{L}^{\circ}}}

\newcommand{\AtomL}{\mathcal{A}_{\Lan}}

\newcommand{\FuncL}{\mathcal{F}_{\Lan}}

\newcommand{\FormsL}{\mathbf{Fm}_{\mathcal{L}}}

\newcommand{\FormAL}{\mathfrak{F}_{\mathcal{L}}}

\newcommand{\alg}[1]{\mathbf{#1}}
\newcommand{\mat}[1]{\mathbf{#1}}
\newcommand{\booleTwo}{\mat{B}_{2}}

\newcommand{\LT}[1]{\mathbf{LT}_{#1}}

\newcommand{\ExtInt}{\mathbf{ExtInt}}


\newcommand{\one}{\boldsymbol{1}}
\newcommand{\zer}{\boldsymbol{0}} 



\newtheorem{examp}{Example}[section]
\newtheorem{conjecture}{Conjecture}

\newtheorem{lem}{Lemma}[section]
\newtheorem{cor}{Corollary}[section]

\newtheorem{prop}{Proposition}[section]
\newtheorem{defn}{Definition}[section]

\begin{document}

\title[Two Modes of Nonmonotonic Consequence]{Two Modes of Nonmonotonic Consequence$^\dagger$}

\author[Alexei Muravitsky]{Alexei Muravitsky}

\address{Louisiana Scholars' College, \\
	Northwestern State University\\
	Natchitoches, LA 71497, USA.}
\email{alexeim@nsula.edu}

\thanks{$^\dagger$This is the text of my speech at the Logica Universalis webinar, which took place on May 11, 2022.}

\ 

\begin{abstract}
We discuss two ways to implement a semantic approach to nonmonotonic consequence relations in an arbitrary propositional language.
For one particular language, we also discuss the proof-theoretic framework that we connect with this semantic approach.
This article is an addition to~\cite{mur2021}.
\end{abstract}

\maketitle

\section{What do we know and how do we know about nonmonotonic consequences?}\label{section:one}
We discuss non-monotony expressed in formalized languages. All main objective languages that will be involved are propositional.\footnote{First-order language is used only as an auxiliary tool.} 

Aristotle is credited with distinguishing between \emph{what we know} and \emph{how we know}. G. Kreisel who made this remark in~\cite{kreisel-1}, p. 42., adds,
\begin{quote}
	The point I wish to stress, again as a matter of accumulated experience, is how much \emph{easier} the former question is to answer (to our satisfaction) than the latter. Of the \emph{what}, whose aspects strike the (mind's) eye, we have easily accessible practical knowledge (which constitutes the threshold for additions by further theory). We also would like to know about the \emph{how}, but the price is high. (Ibid)
\end{quote}

\vskip 0.1in
We note that it is not only the possible easiness of answering the \emph{what} question that prompts us to start with it but also the view that ``knowing how we know is one small department of knowing what we know'' (Russell).\footnote{Here is a full quote:
\begin{quotation}
	I reverse the process which has been common in philosophy since Kant. It has been common among philosophers to begin with how we know and proceed afterwards to what we know. I think this is a mistake, because knowing how we know is one small department of knowing what we know. (\cite{russell1959}, chapter II)
\end{quotation}}

Speaking of ``easily accessible practical knowledge'', Kreisel probably had in mind, first of all, modern mathematics and formal logic. As is known, both are based on monotonic reasoning. On the contrary, with respect to nonmonotonic reasoning, it seems that answering both \emph{what} and \emph{how} questions is not an easy task. We do not think that the following statement by D. Makinson can be taken seriously.
\begin{quote}
	Of course, human have been reasoning nonmonotonically for as long as they have been reasoning at all. (\cite{makinson1994}, section 1.1)
\end{quote}

Despite Makinson's remark, it is precisely because of the lack of rich \emph{contextual practices} involving nonmonotonic reasoning that it is so difficult to answer \emph{what we know} and \emph{how we know} about it.

However, non-monotonicity does occur in colloquial speech. R. Stalnaker~\cite{stalnaker1994} gives an example of nonmonotonic argument based on Grice's concept of (conversational) `implicature'.\footnote{See~\cite{grice1989}, part I.} This example is remarkable only in one respect --- it indicates the need to distinguish between the concepts of argumentation and logic. Logic deals exclusively with forms, while content can be involved in argumentation. An example of the latter is that of Stalnaker --- the nonmonotonic effect of his example is based on the interaction of contents, more precisely, between implicit and explicit contents. Understanding logic as a science dealing with forms of judgment, we will pay attention to sentential variables and, as a consequence, to the role of  substitution, understood both as an operation and as a rule of inference. But first, we outline the \emph{monotonicity-non-monotonicity} dichotomy as such.

Before proceeding to a formal presentation, we note that we are using the concept of logic in its dialectical, and not demonstrative, manner. That is, we will consider the relationship between a set of premises and the conclusions that can be drawn from these premises according to some standards.

For this,  we fix a propositional language, $\Lan$, (later we will be more specific), whose formulas are denoted by letters $ \alpha $, $ \beta $, etc.; sets of formulas by $ X $, $ Y $, etc.; and the set of all formulas by $ \FormsL $.
Given sets $X$ and $Y$ of formulas, we write $X\Subset Y$ if $X$ is a finite subset of $Y$.

The operation of (uniform or simultaneous) substitution is understood in the usual way. We denote substitutions in $\Lan$ by $\sigma$, $\delta$, $\ldots$ (with or without subscripts or other marks). The result of substitution $\sigma$ in a formula $\alpha$ is denoted by $\sigma(\alpha)$; accordingly, 
\[
\sigma(X):=\set{\sigma(\alpha)}{\alpha\in X}.
\]

Let $\C$ be a mapping $\C:\mathcal{P}(\FormsL)\longrightarrow\mathcal{P}(\FormsL)$. Given a set $X\subseteq\FormsL$, $\C(X)$ is understood as the set of all conclusions obtained from $X$. 

The definition 
\begin{equation}\label{E:relation-operator}
	X\vdashC\alpha~\define~\alpha\in\C(X)
\end{equation}
will be very useful subsequently. 

The last definition implies conversion; namely, given a relation $\vdash\subseteq\mathcal{P}(\FormsL)\times\FormsL$, we define:
\begin{equation}\label{E:operator-relation}
\alpha\in\C_{\vdash}(X)~\define~X\vdash\alpha.	
\end{equation}

The definitions~\eqref{E:relation-operator} and~\eqref{E:operator-relation} allow to use relations of the above type and the corresponding operators interchangeably.

The following properties of the $\C$ operator have long been in the spotlight.\footnote{See e.g.~\cite{makinson2005b}.}

\begin{itemize}
	\item [(con-1)] $X\subseteq \C(X)$;\quad(\emph{reflexivity})
	\item [(con-2)] $X\subseteq Y$ implies $\C(X)\subseteq\C(Y)$;\quad(\emph{monotonicity})
	\item [(con-3)] $\C(\C(X))\subseteq\C(X)$;\quad(\emph{closedness})
		\item [(con-4)] $\C(X)\subseteq\bigcup\set{\C(Y)}{Y\Subset X}$;\quad(\emph{finitariness} or \emph{compactness})
	\item [(con-5)] $X\subseteq\C(Y)$ implies $\C(X)\subseteq\C(Y)$;\quad(\emph{cumulative transitivity})
	\item [(con-6)] $X\subseteq Y\subseteq\C(X)$ implies $\C(X)\subseteq\C(Y)$;\quad(\emph{weak cumulative transitivity}\footnote{This property is also known as \emph{cautious monotony}.})
	\item [(con-7)] $X\subseteq\C(Y)$ implies $\C(X\cup Y)\subseteq\C(Y)$;\quad(\emph{strong cumulative transitivity})
	\item [(con-8)] $\bigcup\set{\C(Y)}{Y\Subset X}\subseteq\C(X)$;\quad(\emph{finitary inclusion})
	\item [(con-9)] if $X\neq\varnothing$, then $\C(X)\subseteq\bigcup\set{\C(Y)}{Y\neq\varnothing~\text{and}~Y\Subset X}$.\footnote{We eliminate curly brackets for singleton premises; thus $\C(\alpha)$ stands for $\C(\{\alpha\}$).}\quad\\(\emph{strong finitariness})
\end{itemize}

A. Tarski, who introduced the operator $\C$ into the discourse in his address to the Polish Mathematical Society in 1928,\footnote{He used the notation `$\Cn$' instead of `$\C$'.} began with the following remark.
\begin{quote}
	Our object in this communication is to define the meaning, and to establish the elementary properties, of some important concepts belonging to the	\emph{methodology of deductive sciences}, which, following Hilbert, it is customary to call \emph{metamathematics}. (\cite{tarski1930a})
\end{quote}

\vskip 0.1in
In the published version of this address, Tarski formulated (con-1)--(con-3). Given a set $X\subseteq\FormsL$, he called $\C(X)$ ``the \emph{consequences of the set} $X$,'' which suggests to call any operator $\C$ satisfying these properties a \emph{consequence operator}. It was the rich variety of contextual practices in mathematics and the deductive sciences that prompted him to introduce this notion. In his own worlds,
\begin{quote}
	The concept of \emph{logical consequence} is one of those whose introduction into the field of strict formal investigation was not a matter of arbitrary decision on the part of this or that investigator; $\ldots$ (\cite{tarski1936b})
\end{quote} 

 Tarski also considered (con-4) but not other properties. One interesting property was introduced over a quarter of a century later in~\cite{los-suszko1958}. Namely,
 \vskip 0.1in
 \begin{itemize}
 	\item [(con-10)] $\alpha\in\C(X)$ implies $\sigma(\alpha)\in\C(\sigma(X))$. (\emph{structurality}) 
 \end{itemize}

As for the \emph{what / how} questions about monotonic reasoning, at least with regard to its forms as presented in mathematics and the deductive sciences, it is generally agreed that the properties (con-1)--(con-3) seem to answer the first question, or at least can be regarded as ``the threshold for additions by further theory'' (Kreisel).

Frege and Tarski can be credited for pioneering to answer the second question. In particular, Tarski pointed out the following two ways of implementing monotonic reasoning in metamathematics. According to him, the first way is as follows.
\begin{quote}
	From the sentences of any set $X$ certain other sentences can be obtained by means of certain operations called \emph{rules of inference}. (\cite{tarski1930a})
\end{quote}

The second way is through semantics.\footnote{Cf.~\cite{tarski1936b}.} In the most general terms, one can imagine a non-linguistic machinery $\mathfrak{M}$ such that it is natural to say that the sentence $\alpha$ follows from the set of premises $X$, when $\mathfrak{M}$ accepts $\alpha$ whenever $\mathfrak{M}$ accepts all premises of $X$. For propositional languages, this leads to matrix consequences.\footnote{See e.g.~\cite{wojcicki1988}, chapter 3, or~\cite{citkin-muravitsky2022}, chapter 4.} And although from this level of abstraction the implementation of this idea may vary, we will not discuss here all the alternatives.\footnote{See~\cite{citkin-muravitsky2022}, section 4.3.3.}

Unfortunately, at this point, little can be said about answers to the \emph{what} question in relation to nonmonotonic reasoning. To reiterate, we argue that it is the lack of contextual practices of nonmonotonic reasoning that has led to speculating about ``the minimum conditions that a consequence relation should satisfy to represent a bona fide nonmonotonic \emph{logic} $\ldots$''\footnote{Cf.~\cite{kraus_et_al1990}, section 1.2; comp.~\cite{gabbay1985}.} In contrast to this optimistic hope, one can see a growing confusion in the endeavor of  trying to learn something about the objective world of reasoning through pure thinking. Another confusion, the \emph{what-how} confusion, occurs when someone, in an attempt to give a general definition of nonmonotonic reasoning, gives examples of answers to the \emph{how} question.

If contextual practices of nonmonotonic reasoning are lacking, what can the theory of nonmonotonic reasoning be based on? One possible answer is that it can be based on conceptualized practices of monotonic reasoning. Recall that algebraic logic began with George Boole's algebraization of Aristotle's syllogistic. And to give credit to Kreisel's insight, to answer the \emph{what} question, we should simply propose a $\C$ operator that would not satisfy (con-2).
However, there are many candidates for answering the \emph{how} question, not all of them are indisputable. What seems certain is that any mode of nonmonotonic reasoning worth exploring should be considered together with its monotonic counterpart.\footnote{This idea was systematically implemented in~\cite{makinson2005b}, although only as an explanatory tool, and not as a methodological guide.} Following this observation, we introduce the notation `$\Cn$' to indicate that the last operator satisfies the properties (con-1)--(con-3). The seeming contradiction with the last paragraph is resolved by establishing relations between the operators $\Cn$ and $\C$.

\section{Logicality of (logical) consequences}\label{section:two}
First of all, we must explain the title of this section.

Even in the case of monotonic reasoning, if the answer to the 
\emph{what} question can be explicated in the form of conditions (con-1)--(con-3), any possible answer to the \emph{how} question is fraught with danger. Here are two views on the issue. The first comes from the corner of constructive philosophy.

\vskip 0.1in
\begin{quote}
	A logical inference is a movement from certain propositions (the premises) to a further proposition (the conclusion). We still need to know, however, which of these movements are to be called logical. (\cite{lorenzen1987}, section 4)
\end{quote}

\vskip 0.1in
The second point of view sounds from the corner of proof theory.

\vskip 0.1in
\begin{quote}
	If we simply axiomatize or define the notion of logical consequence with the understanding that a logical consequence holds when this follows logically (!) from the axioms or the definitions, then one may rightly say whether anything really is achieved. (\cite{prawitz1974}, section 2)
\end{quote}

\vskip 0.1in
Such sentiments sometimes, though rarely, appeared in the camp of researchers of nonmonotonic reasoning; in particular, in the second sentence of the following quote (if you forgive the \emph{what-how} confusion in the first).

\vskip 0.1in
\begin{quote}
	$[\ldots]$ what is wanted is not a specific technical definition for a specific non-monotonic logic, but a general account of what consequence is supposed to mean---of what concept some specific technical definition is trying to capture. And one should expect such an account to explain why it is reasonable to call the concept by the name ``consequence''. (\cite{stalnaker1993})
\end{quote}

\vskip 0.1in
Apparently, D. Makinson~\cite{makinson1994}, section 2.2, was the first to propose a clear criterion for the logicality of a nonmonotonic consequence; namely, he formulated that for two operators $\Cn$ (monotonic) and $\C$ (presumably nonmonotonic), the operator $\C$ is logical (with respect to $\Cn$) if
\begin{equation}\label{E:logical-according-Makinson}
	\Cn\C=\C=\C\Cn.
\end{equation}

It should be clarified that any standard on the logicality of nonmonotonic reasoning is an answer to the \emph{what} question, not to the \emph{how} question. In formulating such a standard, we want to prescribe the conditions of what we expect it to be.

To facilitate the analysis of~\eqref{E:logical-according-Makinson}, we introduce two relations similar to~\eqref{E:relation-operator}. So we define:
\begin{equation}\label{E:vdash-Cn-equivalence}
	X\vdash\alpha~\define~\alpha\in\Cn(X),
\end{equation}
and
\begin{equation}\label{E:snake-C-equivalece}
X\mid\!\sim\alpha~\define~\alpha\in\C(X).
\end{equation}

We find that~\eqref{E:logical-according-Makinson} is too strong. Let us consider first the inequality
\begin{equation}\label{E:logical-Makinson-inequality}
	\Cn\C\le\C,
\end{equation}
and express it as follows:
\[
\set{\beta}{X\mid\!\sim \beta}\vdash\alpha~\Longrightarrow~X\mid\!\sim \alpha,
\]
or in a shorter form
\[
\C(X)\vdash\alpha~\Longrightarrow~X\mid\!\sim\alpha.
\]

The last conditional does not look convincing; it is rather unjustifiably specific.\footnote{Although~\eqref{E:logical-Makinson-inequality}, and even~\eqref{E:logical-according-Makinson}, is fulfilled in any epsilon inference probabilistic operator (cf.~\cite{makinson1994}, observation 3.5.2), Makinson, however, makes the following remark.
	\begin{quote}
		It must be said, however, that there is a rather unsatisfying gap between the initial intuitions behind the epsilon approach and their technical formulation [$\dots$].
\end{quote}} 

Turning to the inequality
\[
\C\le\C\Cn,
\]
the following generalization seems to be more flexible:
\begin{equation}\label{E:minimum-condition-for-logical-3}
	(X\mid\!\sim\alpha~\text{and}~Y\subseteq\Cn(X))~\Longrightarrow~X\cup Y\mid\!\sim\alpha.
\end{equation}
Besides,~\eqref{E:minimum-condition-for-logical-3} can be seen as an attempt to make the relation $|\!\!\!\sim$ somewhat monotonic. As we will show in Section~\ref{section:M-r-consequence}, in some interesting models,~\eqref{E:minimum-condition-for-logical-3} does not hold. Nonetheless, as will be seen in the sequel, limited monotonicity can be achieved, although on a different path.

We call a nonmonotonic relation $|$\hspace{-0.03in}$\sim$  \textit{\textbf{logical relative to} a given monotonic consequence relation} $\vdash$ if it satisfies the conditions:
\begin{equation}\label{E:minimum-condition-for-logical-1}
	X\vdash\alpha~\Longrightarrow~X\mid\!\sim\alpha,\footnote{See comments about~\eqref{E:minimum-condition-for-logical-1} in~\cite{makinson2005b}, section 1.3.}
\end{equation}
and
\begin{equation}\label{E:minimum-condition-for-logical-2}
	(X\mid\!\sim\alpha~\text{and}~\alpha\vdash\beta)~\Longrightarrow~X\mid\!\sim\beta.
\end{equation}

The conditions~\eqref{E:minimum-condition-for-logical-1} and~\eqref{E:minimum-condition-for-logical-2} can be formulated in terms of operators $\Cn$ and $\C$ that are defined by \eqref{E:vdash-Cn-equivalence} and~\eqref{E:snake-C-equivalece}, respectively, as follows.

We say that an operator $\C$ is \textbf{logical relative to} $\Cn$ if, and only if, the following conditions are satisfied:
\begin{itemize}
	\item [(log-1)] $\Cn\le\C$;
	\item [(log-2)] $\alpha\in\C(X)$ and $\beta\in\Cn(\alpha)$ imply $\beta\in\C(X)$.
\end{itemize}

\vskip 0.1in
Continuing the discussion of the \emph{what} question in relation to nonmonotonic reasoning, let us turn to the map of properties (con-1)--(con-9) presented in the next proposition.
However, first, we will divide these properties into five (colored) groups: the \textcolor{violet}{\emph{reflexivity property} (con-1)},
the \textcolor{brown}{\emph{closedness property} (con-3)}, the \textcolor{red}{\emph{monotonicity property} (con-2)}, the \textcolor{blue}{\emph{cumulativity properties} (con-5)--(con-7)}, and the \textcolor{green}{\emph{finitariness properties}  (con-4)}, \textcolor{green}{(con-8)} and \textcolor{green}{(con-9)}.
\begin{prop}[cf.~\cite{citkin-muravitsky2022}, proposition 4.2.5]\label{P:con-connections}
	The following implications hold:
	\begin{itemize}
		\item [i)]\quad{\em\textcolor{violet}{(con-1)}} and {\em\textcolor{blue}{(con-6)}} imply {\em\textcolor{brown}{(con-3)}};
		\item [ii)]\quad{\em\textcolor{red}{(con-2)}} and {\em\textcolor{brown}{(con-3)}} imply {\em\textcolor{blue}{(con-5)}};
		\item [iii)]\quad{\em\textcolor{blue}{(con-5)}} implies {\em\textcolor{blue}{(con-6)}};
		\item [iv)]\quad{\em\textcolor{blue}{(con-7)}} implies {\em\textcolor{blue}{(con-6)}};
		\item [v)]\quad{\em\textcolor{violet}{(con-1)}} and {\em\textcolor{blue}{(con-6)}} imply {\em\textcolor{blue}{(con-7)}};
		\item [vi)]\quad{\em\textcolor{violet}{(con-1)}} and {\em\textcolor{blue}{(con-7)}} imply {\em\textcolor{brown}{(con-3)}};
		\item [vii)]\quad{\em\textcolor{violet}{(con-1)}} and {\em\textcolor{red}{(con-2)}} and {\em\textcolor{brown}{(con-3)}} imply {\em\textcolor{blue}{(con-7)}};
		\item [viii)]\quad{\em\textcolor{green}{(con-9)}} implies {\em\textcolor{green}{(con-4)}};
		\item [ix)]\quad{\em\textcolor{red}{(con-2)}} and {\em\textcolor{green}{(con-4)}} imply {\em\textcolor{green}{(con-9)}};
		\item [x)]\quad{\em\textcolor{red}{(con-2)}} implies {\em\textcolor{green}{(con-8)}};
		\item [xi)]\quad{\em\textcolor{green}{(con-4)}} and {\em\textcolor{green}{(con-8)}} imply {\em\textcolor{red}{(con-2)}}.\footnote{The implication (xi) was noted by Makinson who wrote:
			\begin{quote}
				The compactness biconditional must [$\dots$] fail in any nonmonotonic logic, and although the right-to-left half of it [$\ldots$] if $A_0\mid\!\sim x$ for some $A_0$ included in $A$, then $A\mid\!\sim x$] seems the more clearly inappropriate, even its converse is rather dubious$\dots$ (\cite{makinson1994}, p. 41)
		\end{quote}}
	\end{itemize}
\end{prop}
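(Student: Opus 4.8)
The plan is to prove each of the eleven implications \emph{directly}, by substituting judiciously chosen sets into the hypothesised conditions and then collapsing the resulting nested applications of $\C$ or the resulting unions by means of reflexivity, monotonicity, and closedness. The eleven items split into two independent blocks: the \emph{cumulativity block} \textbf{(i)--(vii)}, governed by (con-1), (con-2), (con-3) and the three transitivity-type conditions (con-5)--(con-7); and the \emph{finitariness block} \textbf{(viii)--(xi)}, which uses only monotonicity (con-2) together with the compactness-type conditions (con-4), (con-8), (con-9). I would dispatch the two blocks separately.

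For the cumulativity block the single recurring device is to feed a set of the shape $\C(X)$ or $X\cup Y$ into the premise of the stronger condition and simplify. For \textbf{(ii)}, from $X\subseteq\C(Y)$ monotonicity (con-2) gives $\C(X)\subseteq\C(\C(Y))$, and closedness (con-3) gives $\C(\C(Y))\subseteq\C(Y)$; composing yields (con-5). For \textbf{(vii)}, reflexivity turns $X\subseteq\C(Y)$ into $X\cup Y\subseteq\C(Y)$, and the same (con-2)-then-(con-3) step gives $\C(X\cup Y)\subseteq\C(Y)$, i.e.\ (con-7). For \textbf{(vi)} apply (con-7) with $\C(X)$ in the role of $X$ and $X$ in the role of $Y$: the premise $\C(X)\subseteq\C(X)$ is trivial, (con-1) collapses $X\cup\C(X)=\C(X)$, and the conclusion is exactly (con-3). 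For \textbf{(i)} take $Y:=\C(X)$, so that (con-1) supplies the sandwich $X\subseteq\C(X)\subseteq\C(X)$; reading off from (con-6) that $\C(\C(X))\subseteq\C(X)$ gives (con-3). The mutual relations \textbf{(iii)},\textbf{(iv)},\textbf{(v)} among (con-5),(con-6),(con-7) are then reflexivity-aided reformulations: (con-6) follows from (con-5) by applying the latter to the inclusion $Y\subseteq\C(X)$; (con-6) follows from (con-7) because the extra hypothesis $X\subseteq Y$ collapses $X\cup Y$ to $Y$; and (con-7) follows from (con-1) and (con-6) because, once $X\subseteq\C(Y)$, the set $X\cup Y$ is sandwiched as $Y\subseteq X\cup Y\subseteq\C(Y)$, so (con-6) delivers $\C(X\cup Y)\subseteq\C(Y)$.

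The finitariness block is purely set-theoretic. For \textbf{(x)}, every $Y\Subset X$ satisfies $Y\subseteq X$, whence (con-2) gives $\C(Y)\subseteq\C(X)$; taking the union over all such $Y$ yields (con-8). For \textbf{(xi)}, (con-4) and (con-8) together assert $\C(X)=\bigcup\set{\C(Y)}{Y\Subset X}$; given $X\subseteq Z$, each $Y\Subset X$ is also a finite subset of $Z$, so (con-8) applied to $Z$ gives $\C(Y)\subseteq\C(Z)$, and feeding this back through (con-4) yields $\C(X)\subseteq\C(Z)$, i.e.\ (con-2). For \textbf{(viii)}, the union over \emph{nonempty} finite $Y$ is a sub-union of the union over \emph{all} finite $Y$, so the bound in (con-9) at once implies the bound in (con-4) (the case $X=\varnothing$ being trivial). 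For \textbf{(ix)}, (con-4) bounds $\C(X)$ by the union over all $Y\Subset X$, and to discard the unwanted term $\C(\varnothing)$ one uses $X\neq\varnothing$ to fix some $\alpha\in X$ and invokes (con-2) to absorb $\C(\varnothing)\subseteq\C(\alpha)$ into a nonempty term.

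The main obstacle I anticipate lies entirely in \textbf{(i)}--\textbf{(vii)}: the bookkeeping of the \emph{directions} of the inclusions. Each step hinges on choosing the correct set to substitute and on verifying the sandwich $X\subseteq Y\subseteq\C(X)$ (or the collapse $X\cup Y=\C(Y)$) \emph{before} a condition may be applied, and it is very easy to fire (con-6) or (con-7) on the wrong pair and obtain the reverse inclusion. In particular the orientation of (con-6)---whether it is used as $\C(X)\subseteq\C(Y)$ or as $\C(Y)\subseteq\C(X)$---is the single delicate point on which items \textbf{(i)},\textbf{(iii)},\textbf{(iv)},\textbf{(v)} turn, and it must be pinned down carefully. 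By contrast the finitariness block is routine, its only wrinkle being the empty-premise edge case handled in \textbf{(ix)}.
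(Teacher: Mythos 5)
The paper itself offers no proof of this proposition---it defers to proposition 4.2.5 of the cited book---so your argument can only be judged on its own terms, and on those terms it is essentially correct and complete. The finitariness block (viii)--(xi) is exactly right, including the two edge cases you isolate: absorbing $\C(\varnothing)\subseteq\C(\{\alpha\})$ into a nonempty term in (ix), and the trivial case $X=\varnothing$ in (viii). Items (ii), (vi) and (vii) of the cumulativity block are likewise correct as written.

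The one point you must not leave as an unresolved ``delicate point'' is the orientation of (con-6), because it is not merely delicate: all four of your uses of it---in (i), (iii), (iv) and (v)---read (con-6) as ``$X\subseteq Y\subseteq\C(X)$ implies $\C(Y)\subseteq\C(X)$,'' whereas the paper prints the opposite inclusion $\C(X)\subseteq\C(Y)$ (the cautious-monotony direction, as its footnote says). Your orientation is in fact the only one under which the proposition is true. With the printed orientation, items (i), (iii), (iv) and (v) are false: take formulas $p_1,p_2,\dots$ and define
\[
\C(X):=X\cup\set{p_{n+1}}{p_n\in X};
\]
this operator is reflexive and monotonic, hence satisfies the printed (con-6) trivially, yet
\[
\C(\C(\{p_1\}))=\{p_1,p_2,p_3\}\not\subseteq\{p_1,p_2\}=\C(\{p_1\}),
\]
refuting (i); the same operator refutes (v), since $\{p_2\}\subseteq\C(\{p_1\})$ while $\C(\{p_1,p_2\})\not\subseteq\C(\{p_1\})$. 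So your proofs are sound for the intended reading of (con-6) (weak cumulative transitivity, i.e.\ weak cut), but a complete write-up should state that orientation explicitly at the outset and note that the inclusion printed in the statement of (con-6)---and its footnoted identification with cautious monotony---must be reversed for items (i) and (iii)--(v) to hold at all.
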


As is clear from the last proposition, if we intend to challenge the monotonicity property (con-2), and we do so, we must be aware that other properties will be affected. 

Note that (con-1) is always satisfied if the consequence operator in question is logical. We also observe that the cumulative properties and the finitariness properties do not interact. Further, in the presence of (con-1), if (con-7) does not hold, it is problematic that (con-3) holds.

In the mode of nonmonotonic reasoning that we are about to propose (thereby answering the \emph{how} question), the closedness and cumulative properties that have been the focus of nonmonotonic research\footnote{See e.g.~\cite{makinson1989}.} will fail. Therefore, we will be interested in the finitariness properties. But even in this case, by Proposition~\ref{P:con-connections}-xi, we cannot have both (con-4) and (con-8). As is expected, (con-4) will take precedence.

\section{Matrix (monotonic) consequence and two restricted matrix consequences}\label{section:three}
In this section, we show some ways of passing from monotonic consequences to nonmonotonic consequences. We note that, although our starting point will be language-independent, in rejection of (con-2) we are forced to deal with specifics of the language involved. However, further, in showing which properties of the list (con-1)-(con-9) are preserved, we again can deal with a language-independent framework.

Fixing a language $\Lan$, we also fix the following notations:
\begin{itemize}
	\item $\VarL$ is an infinite set of propositional variables;
	\item $\FuncL$ is a set of \textit{logical constants}, also known as \textit{logical connectives}, including $0$-ary logical constants, also known as \textit{constants};
	\item $\AtomL$ is the set of all \textit{atomic formulas}, that is, the set of all variables and constants;
	\item the universal algebra $\FormAL:=\lr{\FormsL,\FuncL}$ is called the $\Lan$-\textit{formula algebra} or simply \textit{formula algebra} when $\Lan$ is unambiguous;
	\item given a set $X\subseteq\FormsL$, $\V(X)$ is the set of propositional variables occurring in all formulas from $X$. 
\end{itemize}

Interpreting elements of $\FuncL$ as operations on a nonempty set \textsf{A}, we obtain an algebra $\alg{A}:=\lr{\textsf{A},\FuncL}$. Subsequently, we also use the notation $|\alg{A}|:=\textsf{A}$.

A system $\mat{M}:=\lr{\alg{A}, D}$, where $D\subseteq|\alg{A}|$ is called a (\textit{logical}) \textit{matrix}.

Any homomorphism $v:\FormAL\longrightarrow\alg{A}$ is called a \textit{valuation} in $\alg{A}$; if $\alg{A}$ is part of a matrix $\mat{M}$, $v$ is also a \textit{valuation} in $\mat{M}$. The result of the application of $v$ to $\alpha$ is denoted by $v[\alpha]$; and for $X\subseteq\FormsL$,
\[
v[X]:=\set{v[\alpha]}{\alpha\in X}.
\]

We note that any mapping $v:\VarL\longrightarrow\alg{A}$ can be uniquely extended to a homomorphism $\hat{v}:\FormAL\longrightarrow\alg{A}$. Because of this, we often regard $v$ as a \textit{valuation} in $\alg{A}$

\subsection{Matrix consequence}\label{section:three.one}
In defining a monotonic consequence, in order to keep a language-independent framework, we use one of Tarski's modes of semantic consequence (Section~\ref{section:one}).

\vskip 0.1in
Given a matrix $\mat{M}=\lr{\alg{A},D}$, a relation $\modelM\subseteq\mathcal{P}(\FormsL)\times\FormsL$ is defined as follows:
\begin{equation}\label{E:M-consequence}
X\modelM\alpha~\define~\text{for any valuation $v$, $v[X]\subseteq D$ implies $v[\alpha]\in D$}.
\end{equation}

The last relation can be generalized.

Let $\mathcal{M}$ be a nonempty set of matrices. Then we define:
\begin{equation}\label{E:calM-consequence}
X\modelcalM\alpha\define~X\modelM\alpha~\text{for every $\mat{M}\in\mathcal{M}$}.
\end{equation}

We will use the notations $\Cn_{\mat{M}}$ and $\Cn_{\mathcal{M}}$, thereby indicating that the former is the operator corresponding to $\modelM$ and the latter is the operator corresponding to $\modelcalM$, both in the light of definition~\eqref{E:operator-relation}.\footnote{It must be clear that if $\mathcal{M}=\{\mat{M}\}$, $\modelM$ and $\modelcalM$ can be used interchangeably.} These notations are justified by the following proposition.

\begin{prop}\label{P:lukasiewicz-tarski}
	Any operator $\Cn_{\mathcal{M}}
	$ satisfies the conditions {\em(con-1)--(con-3)} and {\em(con-10)}.\footnote{This statement goes back to~\cite{lukasiewicz-tarski1930}, theorem 3; For modern expositions, the reader is offered to consult~\cite{wojcicki1988}, theorem 3.1.3, or~\cite{dunn-hardegree2001}, corollary 6.13.3, or~\cite{citkin-muravitsky2022}, proposition 4.3.11.}
\end{prop}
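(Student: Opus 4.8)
The plan is to reduce everything to the single-matrix case first. By definition~\eqref{E:calM-consequence}, $\alpha\in\Cn_{\mathcal{M}}(X)$ holds exactly when $\alpha\in\Cn_{\mat{M}}(X)$ for every $\mat{M}\in\mathcal{M}$, so that $\Cn_{\mathcal{M}}(X)=\bigcap_{\mat{M}\in\mathcal{M}}\Cn_{\mat{M}}(X)$. Each of (con-1)--(con-3) and (con-10) is inherited by such an intersection once it is known for the individual $\Cn_{\mat{M}}$: reflexivity and structurality pass to the intersection coordinatewise, monotonicity is immediate, and the only slightly less mechanical case is closedness, where from $\Cn_{\mathcal{M}}(X)\subseteq\Cn_{\mat{M}}(X)$ together with the monotonicity and closedness of $\Cn_{\mat{M}}$ one obtains $\Cn_{\mat{M}}(\Cn_{\mathcal{M}}(X))\subseteq\Cn_{\mat{M}}(\Cn_{\mat{M}}(X))\subseteq\Cn_{\mat{M}}(X)$, and intersecting over $\mat{M}$ yields $\Cn_{\mathcal{M}}(\Cn_{\mathcal{M}}(X))\subseteq\Cn_{\mathcal{M}}(X)$. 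Hence it suffices to establish all four properties for a fixed matrix $\mat{M}=\lr{\alg{A},D}$.

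For a single matrix the first two properties are direct from~\eqref{E:M-consequence}. Reflexivity (con-1) holds because any valuation $v$ with $v[X]\subseteq D$ sends each $\alpha\in X$ into $D$, so $X\modelM\alpha$. Monotonicity (con-2) follows since enlarging the premises only restricts the valuations under consideration: if $X\subseteq Y$ and $v[Y]\subseteq D$, then $v[X]\subseteq D$, and $X\modelM\alpha$ gives $v[\alpha]\in D$, so $Y\modelM\alpha$.

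The key step for closedness (con-3) is the observation that whenever $v[X]\subseteq D$ one already has $v[\Cn_{\mat{M}}(X)]\subseteq D$: indeed, any $\beta\in\Cn_{\mat{M}}(X)$ satisfies $X\modelM\beta$, and $v[X]\subseteq D$ then forces $v[\beta]\in D$. Consequently, if $\alpha\in\Cn_{\mat{M}}(\Cn_{\mat{M}}(X))$ and $v$ is any valuation with $v[X]\subseteq D$, then $v[\Cn_{\mat{M}}(X)]\subseteq D$ forces $v[\alpha]\in D$, so $X\modelM\alpha$, i.e. $\alpha\in\Cn_{\mat{M}}(X)$.

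Finally, structurality (con-10) rests on the fact that a substitution $\sigma$ extends to an endomorphism of the formula algebra $\FormAL$, so that for any valuation $v$ the assignment $\beta\mapsto v[\sigma(\beta)]$ is itself a homomorphism $\FormAL\longrightarrow\alg{A}$, hence a valuation; denote it by $w$, so $w[\beta]=v[\sigma(\beta)]$ for every formula $\beta$. Given $\alpha\in\Cn_{\mat{M}}(X)$ and a valuation $v$ with $v[\sigma(X)]\subseteq D$, we have $w[X]=v[\sigma(X)]\subseteq D$, whence $w[\alpha]=v[\sigma(\alpha)]\in D$; this yields $\sigma(X)\modelM\sigma(\alpha)$, i.e. $\sigma(\alpha)\in\Cn_{\mat{M}}(\sigma(X))$. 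I expect the main (though still modest) obstacle to be the two composition facts that carry the argument --- the implication $v[X]\subseteq D\Rightarrow v[\Cn_{\mat{M}}(X)]\subseteq D$ used for (con-3), and the substitution-as-endomorphism identity used for (con-10); the remaining steps are immediate from the definitions.
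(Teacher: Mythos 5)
Your proof is correct. The paper itself offers no proof of this proposition---it is stated as a classical result with pointers to {\L}ukasiewicz--Tarski and the modern expositions in the footnote---and your argument is precisely the standard one found in those sources: reduce to a single matrix via the intersection formula~\eqref{E:Cn_calM}, verify (con-1)--(con-3) directly from the definition of $\modelM$ (your key observation that $v[X]\subseteq D$ forces $v[\Cn_{\mat{M}}(X)]\subseteq D$ is exactly the step that makes closedness work), and obtain (con-10) from the fact that $v\circ\sigma$ is again a valuation because substitutions are endomorphisms of $\FormAL$.
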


We note that the next equality is an immediate consequence of~\eqref{E:calM-consequence}.
\begin{equation}\label{E:Cn_calM}
	\Cn_{\mathcal{M}}(X)=\bigcap_{\mat{M}\in\mathcal{M}}\Cn_{\mat{M}}(X).
\end{equation}

\subsection{M-r-consequence and $\mathcal{M}$-r-consequence}\label{section:three.two}
Let $\V\subseteq\VarL$ and $v$ be a valuation in an algebra $\alg{A}$, regarded as a mapping on $\VarL$. We call $v\upharpoonright\V$ a \textit{restriction} (of $v$) to $\V$.
Given $\V\subseteq\VarL$, we denote by $v_{\V}$ a \emph{restricted valuation} with $\dom(v)=\V$, where $\dom(v)$ is the domain of $v$.

Given two restrictions $v$ and $w$ in the same algebra, we write $v\le w$ (or $w\ge v$) and say that $w$ is an \textit{extension} of $v$ if $\dom(v)\subseteq\dom(w)$.

In light of these notions, the relation $\modelM$ (see~\eqref{E:M-consequence}) can be reformulated as follows:
\[
\begin{array}{rcl}
X\modelM\alpha &\Longleftrightarrow &\text{for any restriction $v$ with $\dom(v)=\V(X)$},\\
&&\text{if $v[X]\subseteq D$, then for any extension $w\ge v$}\\
&&\text{with $\V(X\cup\{\alpha\})\subseteq\dom(w)$, $w[\alpha]\in D$}.
\end{array}
\]

Changing the right-hand side of the last equivalence, we obtain the following definition.\footnote{The idea of this definition is due to D. Makinson, who used it with a two-element Boolean matrix; cf.~\cite{makinson2005a, makinson2007}.}
\begin{equation}\label{E:M-r-consequence}
	\begin{array}{rcl}
		X\modelMr\alpha &\define &\text{for any restriction $v$ with $\dom(v)=\V(X)$},\\
		&&\text{if $v[X]\subseteq D$, then there is an extension $w\ge v$}\\
		&&\text{with $\V(X\cup\{\alpha\})\subseteq\dom(w)$, such that $w[\alpha]\in D$}.
	\end{array}
\end{equation}

Generalizing the last definition to any nonempty set $\mathcal{M}$ of matrices, we obtain the following.
\begin{equation}\label{E:calM-r-consequence}
X\modelcalMr\alpha~\Longleftrightarrow~X\modelMr\alpha~\text{for every $\mat{M}\in\mathcal{M}$}.
\end{equation}

The last definitions define M-r-\textit{consequence} and $\mathcal{M}$-r-\textit{consequence}, respectively; we call them collectively \textit{restricted matrix consequences}.

We denote by $\C_{\mat{M}}^{r}$  and $\C_{\mathcal{M}}^{r}$ the operators corresponding to $\modelMr$ and $\modelcalMr$, respectively, and then we observe:
\begin{equation}\label{E:C_calMr}
\C_{\mathcal{M}}^{r}(X)=\bigcap_{\mat{M}\in\mathcal{M}}\C_{\mat{M}}^{r}(X),
\end{equation}
which immediately follows from~\eqref{E:M-r-consequence}

\vskip 0.1in
To prove the next proposition, we borrow a definition from~\cite{mur2021} and then state a lemma.
\begin{defn}[comp.~\cite{mur2021}, definition 3.5]
Let $\mat{M}$ be a matrix.	Given a nonempty set $X$ of formulas and a valuation $v$ in $\mat{M}$, we say that
	$v$ \textbf{adopts} $X$ in $\mat{M}$, in symbols $v\adoptsM X$, if there is an extension $w$ of $v$ with $\dom(w)=\Var(X)\cup\dom(v)$ which validates all formulas of $X$. If $X=\{\alpha\}$, we write $v\adoptsM\alpha$.
\end{defn}

\begin{lem}[cf.~\cite{mur2021}, proposition 3.4]\label{L:one}
		Given $X\cup\{\alpha\}\subseteq\FormsL$, let us denote $\Var_{0}:=\Var(X)\cap\Var(\alpha)$. Then for any matrix $\mat{M}$,
	\begin{equation}\label{E:equivalence-M-r}
		X\modelMr\alpha~\Longleftrightarrow~(\text{for any $v_{\Var_0}$ in $\mat{M}$},~v_{\Var_0}\adoptsM X~\Longrightarrow~v_{\Var_0}\adoptsM\alpha).
	\end{equation}
\end{lem}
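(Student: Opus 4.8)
The plan is to prove the biconditional \eqref{E:equivalence-M-r} by unwinding both sides into statements about restricted valuations and their domains; the entire argument rests on one elementary fact, which I will call the \emph{coincidence property}: if two valuations agree on $\Var(\gamma)$, then they assign the same value to $\gamma$. In particular $w[\alpha]$ depends only on $w\upharpoonright\Var(\alpha)$, and the condition $v[X]\subseteq D$ depends only on $v\upharpoonright\Var(X)$. Throughout I abbreviate $V_X:=\Var(X)$ and $V_\alpha:=\Var(\alpha)$, so that $\Var_0=V_X\cap V_\alpha$, and I record that $\Var_0\subseteq V_X$ and $\Var_0\subseteq V_\alpha$.

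For the forward direction, I assume $X\modelMr\alpha$ and take any restricted valuation $v_{\Var_0}$ with $v_{\Var_0}\adoptsM X$. By the definition of ``adopts'', $v_{\Var_0}$ has an extension $w_0$ with $\dom(w_0)=\Var(X)\cup\Var_0=V_X$ and $w_0[X]\subseteq D$. This $w_0$ is exactly a restriction of the kind quantified over in \eqref{E:M-r-consequence}, so M-r-consequence supplies an extension $w\ge w_0$ with $V_X\cup V_\alpha\subseteq\dom(w)$ and $w[\alpha]\in D$. Put $u:=w\upharpoonright V_\alpha$. By the coincidence property $u[\alpha]=w[\alpha]\in D$; since $\Var_0\subseteq V_\alpha$ and $w$ extends $v_{\Var_0}$, the map $u$ still extends $v_{\Var_0}$ and has domain $\Var(\alpha)\cup\Var_0=V_\alpha$. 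Hence $u$ witnesses $v_{\Var_0}\adoptsM\alpha$, which is the right-hand side.

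For the converse I assume the right-hand side and take any restriction $v$ with $\dom(v)=V_X$ and $v[X]\subseteq D$. Set $v_{\Var_0}:=v\upharpoonright\Var_0$. Then $v$ itself is an extension of $v_{\Var_0}$ with domain $V_X=\Var(X)\cup\Var_0$ validating $X$, so $v_{\Var_0}\adoptsM X$; by hypothesis $v_{\Var_0}\adoptsM\alpha$, yielding an extension $u$ of $v_{\Var_0}$ with $\dom(u)=V_\alpha$ and $u[\alpha]\in D$. Now I glue $v$ and $u$ into one valuation $w$ on $V_X\cup V_\alpha$. This is the crux, and the reason the statement singles out $\Var_0=V_X\cap V_\alpha$: on the overlap $V_X\cap V_\alpha=\Var_0$ both $v$ and $u$ restrict to $v_{\Var_0}$, so they agree there and $w$ is well defined; it is a valuation because any assignment of values to variables extends uniquely to a homomorphism. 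By construction $w\ge v$, one has $\V(X\cup\{\alpha\})\subseteq V_X\cup V_\alpha=\dom(w)$, and $w[\alpha]=u[\alpha]\in D$ by the coincidence property. Thus $w$ is the extension required by \eqref{E:M-r-consequence}, and $X\modelMr\alpha$ follows.

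The only genuinely delicate point is the gluing step in the converse: one must check that the two witnessing valuations are compatible on their common variables, and this compatibility is precisely what the choice of $\Var_0$ as the set of \emph{shared} variables guarantees. Had the restricted domain been strictly larger than $V_X\cap V_\alpha$, the agreement of $v$ and $u$ could fail; had it been strictly smaller, $u$ would no longer determine $w[\alpha]$. The degenerate cases ($\Var_0=\varnothing$, giving the empty restricted valuation, or $X$ containing no variables) are covered verbatim by the same argument, provided $X$ is nonempty so that $v_{\Var_0}\adoptsM X$ is meaningful.
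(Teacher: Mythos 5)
Your proof is correct: both directions check out, the gluing of $v$ and $u$ on the overlap $\Var_0=\Var(X)\cap\Var(\alpha)$ is exactly the point that needs care and you handle it properly, and you rightly flag that $X$ must be nonempty for $v_{\Var_0}\adoptsM X$ to be defined. Note that the paper itself gives no proof of Lemma~\ref{L:one} --- it is imported from~\cite{mur2021}, Proposition~3.4 --- so there is no in-text argument to compare against; yours is the natural unwinding of definition~\eqref{E:M-r-consequence} that the citation stands in for.
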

\begin{prop}\label{P:Mr-logical}
Any operator $\C_{\mathcal{M}}^{r}$ is logical relative to
$\Cn_{\mathcal{M}}$.
\end{prop}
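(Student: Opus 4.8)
The plan is to verify the two defining conditions (log-1) and (log-2) for the pair $\bigl(\C_{\mathcal{M}}^{r},\Cn_{\mathcal{M}}\bigr)$, after first reducing everything to a single matrix. By the intersection formulas~\eqref{E:Cn_calM} and~\eqref{E:C_calMr}, we have $\Cn_{\mathcal{M}}(X)=\bigcap_{\mat{M}\in\mathcal{M}}\Cn_{\mat{M}}(X)$ and $\C_{\mathcal{M}}^{r}(X)=\bigcap_{\mat{M}\in\mathcal{M}}\C_{\mat{M}}^{r}(X)$. Hence it suffices to prove, for each individual $\mat{M}\in\mathcal{M}$, that (i) $\Cn_{\mat{M}}(X)\subseteq\C_{\mat{M}}^{r}(X)$ and (ii) $\alpha\in\C_{\mat{M}}^{r}(X)$ together with $\beta\in\Cn_{\mat{M}}(\alpha)$ implies $\beta\in\C_{\mat{M}}^{r}(X)$. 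Indeed, if a formula lies in the $\mathcal{M}$-intersection it lies in every $\mat{M}$-factor, so running (i) and (ii) matrix-by-matrix and re-intersecting over $\mathcal{M}$ recovers (log-1) and (log-2) for the $\mathcal{M}$-operators.

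For the single-matrix version of (log-1), I would suppose $X\modelM\alpha$ and take any restriction $v$ with $\dom(v)=\V(X)$ such that $v[X]\subseteq D$. Extend $v$ to a full valuation $w$ on $\VarL$ by assigning arbitrary values to the remaining variables. Since every variable of $X$ already lies in $\dom(v)$, we get $w[X]=v[X]\subseteq D$, so applying $X\modelM\alpha$ to the homomorphism $w$ yields $w[\alpha]\in D$. As $w\ge v$ and $\V(X\cup\{\alpha\})\subseteq\dom(w)=\VarL$, this $w$ is exactly the extension demanded by~\eqref{E:M-r-consequence}, whence $X\modelMr\alpha$.

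For the single-matrix version of (log-2), I would assume $X\modelMr\alpha$ and $\alpha\modelM\beta$, and fix a restriction $v$ with $\dom(v)=\V(X)$ and $v[X]\subseteq D$. By $X\modelMr\alpha$ there is an extension $w\ge v$ with $\V(X\cup\{\alpha\})\subseteq\dom(w)$ and $w[\alpha]\in D$. Extend $w$ further to a full valuation $u$ on $\VarL$. Then $u[\alpha]=w[\alpha]\in D$, and applying $\alpha\modelM\beta$ to $u$ gives $u[\beta]\in D$. Since $u\ge v$ and $\V(X\cup\{\beta\})\subseteq\dom(u)=\VarL$, the valuation $u$ witnesses the defining clause~\eqref{E:M-r-consequence} for $\beta$, so $X\modelMr\beta$, i.e.\ $\beta\in\C_{\mat{M}}^{r}(X)$.

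The only genuine subtlety — and the step to carry out with care — is the bookkeeping of variables: the monotonic relation $\modelM$ is defined through full valuations (homomorphisms of the formula algebra), whereas $\modelMr$ speaks of restricted valuations, and $\beta$ may carry variables occurring neither in $X$ nor in $\alpha$. The passage from the restricted witness $w$ to a full valuation $u$ is precisely what allows the monotonic step $\alpha\modelM\beta$ to be applied without any concern for those extra variables, since $u$ is defined on all of $\VarL$. Alternatively, one could route both parts through the adoption criterion of Lemma~\ref{L:one}, but the direct extension-to-a-full-valuation argument above appears to be the most economical.
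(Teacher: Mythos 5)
Your proof is correct, and its skeleton coincides with the paper's: the same reduction to a single matrix via the intersection formulas \eqref{E:Cn_calM} and \eqref{E:C_calMr}, and, for (log-2), the same essential device of extending the restricted witness for $\alpha$ to a total valuation so that the monotonic relation $\alpha\modelM\beta$ can be applied. Where you genuinely diverge is in how the conclusion $X\modelMr\beta$ is certified. The paper routes (log-2) entirely through Lemma~\ref{L:one}: it fixes $\V_0=\V(X)\cap\V(\beta)$, assumes $v_{\V_0}\adoptsM X$, builds the total valuation validating $\beta$, concludes $v_{\V_0}\adoptsM\beta$, and then invokes the lemma to translate back into $X\modelMr\beta$. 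You instead check definition~\eqref{E:M-r-consequence} head-on: the total valuation $u$ you construct is itself an extension of the given restriction $v$ with $\V(X\cup\{\beta\})\subseteq\dom(u)$ and $u[\beta]\in D$, so it serves directly as the required witness. Your route is the more economical one, since it renders Lemma~\ref{L:one} unnecessary for this proposition and also spells out (log-1), which the paper dismisses as obvious; what the paper's detour buys is uniformity with the machinery of the companion paper \cite{mur2021} and the explicit normalization that only valuations on the shared variables $\V(X)\cap\V(\beta)$ need to be considered --- a reduction that pays off in later arguments (e.g., the finitariness results), but is not needed here.
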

\begin{proof}
	To prove the statement, we have to verify the conditions (log-1) and (log-2) for $\Cn_{\mathcal{M}}$ and $\C_{\mathcal{M}}^{r}$. It must be clear that for this, it suffices to verify (log-1) and (log-2) for $\Cn_{\mat{M}}$ and $\C_{\mat{M}}^{r}$, where $\mat{M}\in\mathcal{M}$.
	
	Let us fix such $\mat{M}=\lr{\alg{A},D}$.
	
	The condition (log-1), which is $\Cn_{\mat{M}}\le\C_{\mat{M}}^{r}$, obviously is true.
	
To prove (log-2), we use its form~\eqref{E:minimum-condition-for-logical-2}, that is,
\[
X\modelMr\alpha~\text{and}~\alpha\modelM\beta~\text{imply}~X\modelMr\beta.
\]

Intending to apply~\eqref{E:equivalence-M-r}, we denote $\V_{0}=\V(X)\cap\V(\beta)$ and assume that $v_{\V_0}\adoptsM X$.

This implies that there is a restricted valuation $v\ge v_{\V_{0}}$ such that $\dom(v)=\V(X)$ and $v[X]\subseteq D$. In virtue of the first assumption, there is a valuation $v^\prime\ge v$ such that $\dom(v^\prime)=\V(X)\cup\V(\alpha)$ and $v^{\prime}[X\cup\{\alpha\}]\subseteq D$. Now, given an arbitrary $a\in|\alg{A}|$, we define a valuation $w$ as follows:
\[
w[q]:=\begin{cases}
	\begin{array}{cl}
		v^\prime[p] &\text{if $p\in\V(X)\cup\V(\alpha)$}\\
		a &\text{otherwise}.
	\end{array}
\end{cases}
\]

It is obvious that $w$ validates $\alpha$ and hence, in virtue of the second assumption, $w$ also validates $\beta$. Since $v_{\V_{0}}\le w$, we conclude that $v_{\V_{0}}\adoptsM\beta$. It remains to apply Lemma~\ref{L:one}.
\end{proof}

The next two definitions were introduced in~\cite{mur2021}.
\begin{defn}[cf.~\cite{mur2021}, definition 3.6]\label{D:weakly-monotonic}
	A relation {\em$\vdash\subseteq\mathcal{P}(\FormsL)\times\FormsL$}, or the operator $\C$ corresponding to this relation, is \textbf{weakly monotonic} if for any set {\em$X\cup Y\cup\{\alpha\}\subseteq\FormsL$} with $X\subseteq Y$ and
	$\Var(X)\cap\Var(\alpha)=\Var(Y)\cap\Var(\alpha)$, $X\vdash\alpha$ implies $Y\vdash\alpha$. 
\end{defn}
\begin{defn}[cf.~\cite{mur2021}, definition 5.1]\label{D:very-strongly-finitary}
	A  relation $\vdash\subseteq\mathcal{P}(\FormsL)\times\FormsL$, or the operator $\C$ corresponding to this relation, is called \textbf{very strongly finitary} if for any nonempty set $X$, if $X\vdash\alpha$, then there is a nonempty $Y\Subset X$ with $\Var(Y)\cap\Var(\alpha)=\Var(X)\cap\Var(\alpha)$ such that $Y\vdash\alpha$.
\end{defn}

The following properties of $\mathcal{M}$-r-consequence were established earlier. 
\begin{prop}[\cite{mur2021}, corollary 3.1]\label{P:Mr-weakly-monotonic}
	Any $\mathcal{M}$-r-consequence is weakly monotonic.
\end{prop}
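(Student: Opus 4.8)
The plan is to reduce the claim to a single matrix and then read off both the hypothesis and the conclusion through the adoption criterion of Lemma~\ref{L:one}. By~\eqref{E:calM-r-consequence}, the relation $\modelcalMr$ holds exactly when $\modelMr$ holds for every $\mat{M}\in\mathcal{M}$, so it suffices to prove that each single-matrix relation $\modelMr$ is weakly monotonic; weak monotonicity is then inherited by the intersection, since if the conclusion $Y\modelMr\alpha$ holds in every $\mat{M}\in\mathcal{M}$, then $Y\modelcalMr\alpha$. Fix $\mat{M}=\lr{\alg{A},D}$, suppose $X\subseteq Y$ with $\Var(X)\cap\Var(\alpha)=\Var(Y)\cap\Var(\alpha)$, and assume $X\modelMr\alpha$.

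The crucial observation is that the common value $\Var_0:=\Var(X)\cap\Var(\alpha)=\Var(Y)\cap\Var(\alpha)$ is simultaneously the set that Lemma~\ref{L:one} attaches to the pair $(X,\alpha)$ and the one it attaches to the pair $(Y,\alpha)$. This is precisely what the defining hypothesis of weak monotonicity buys us, and it is the point at which genuine (unrestricted) monotonicity would fail. Thus by Lemma~\ref{L:one} the hypothesis $X\modelMr\alpha$ says that every restricted valuation $v_{\Var_0}$ with $v_{\Var_0}\adoptsM X$ also satisfies $v_{\Var_0}\adoptsM\alpha$, while the desired conclusion $Y\modelMr\alpha$ is, by the same lemma and the \emph{same} $\Var_0$, the assertion that every $v_{\Var_0}$ with $v_{\Var_0}\adoptsM Y$ satisfies $v_{\Var_0}\adoptsM\alpha$.

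It therefore remains to show that, for a fixed $v_{\Var_0}$, adopting the larger set $Y$ entails adopting the smaller set $X$. If $v_{\Var_0}\adoptsM Y$, there is an extension $w$ of $v_{\Var_0}$ with $\dom(w)=\Var(Y)\cup\Var_0$ validating every formula of $Y$, hence every formula of $X$ since $X\subseteq Y$. Restricting $w$ to $\Var(X)\cup\Var_0=\Var(X)$ (recall $\Var_0\subseteq\Var(X)$) yields an extension of $v_{\Var_0}$ with domain $\Var(X)$ that still validates $X$; this witnesses $v_{\Var_0}\adoptsM X$. Chaining the implications $v_{\Var_0}\adoptsM Y\Rightarrow v_{\Var_0}\adoptsM X\Rightarrow v_{\Var_0}\adoptsM\alpha$ gives $Y\modelMr\alpha$ via Lemma~\ref{L:one}, and passing to the intersection over $\mathcal{M}$ finishes the argument.

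I expect the only delicate point to be the bookkeeping of domains in the restriction step: one must check that discarding the values of $w$ on the variables in $\Var(Y)\setminus\Var(X)$ does not disturb the truth of any formula in $X$ (it cannot, since those variables do not occur in $X$), and that the resulting restriction still extends $v_{\Var_0}$ and has the exact domain required by the adoption relation. Everything else is a direct transcription through Lemma~\ref{L:one}, with the equality $\Var(X)\cap\Var(\alpha)=\Var(Y)\cap\Var(\alpha)$ doing all the real work.
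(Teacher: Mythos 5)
Your proof is correct, and it follows what is evidently the intended route: the paper itself does not reprove this statement (it cites it as Corollary~3.1 of~\cite{mur2021}, where it is derived from Proposition~3.4, i.e.\ Lemma~\ref{L:one} here), and your argument is exactly that derivation --- reduce to a single matrix via~\eqref{E:calM-r-consequence}, note that the hypothesis $\Var(X)\cap\Var(\alpha)=\Var(Y)\cap\Var(\alpha)$ makes Lemma~\ref{L:one} quantify over the same restricted valuations $v_{\Var_0}$ for both pairs, and use the anti-monotonicity of adoption ($v_{\Var_0}\adoptsM Y$ implies $v_{\Var_0}\adoptsM X$ when $X\subseteq Y$, by restricting the witnessing extension). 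Your handling of the domain bookkeeping in that last step is exactly right and is the only point where care is needed.
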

\begin{prop}[\cite{mur2021}, proposition 5.1]\label{P:Mr-very-strongly-finitary}
If $\mathcal{M}$ is a finite set of finite matrices, then $\mathcal{M}$-r-consequence is very strongly finitary.
\end{prop}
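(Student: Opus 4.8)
The plan is to reduce everything to the adoption characterization of M-r-consequence supplied by Lemma~\ref{L:one}, and then to extract a finite subset by a compactness argument that exploits the finiteness of $\mathcal{M}$ and of each of its matrices. Throughout, write $\Var_0 := \Var(X) \cap \Var(\alpha)$, which is finite since $\alpha$ is a single formula. Because the $Y$ we build will satisfy $Y \subseteq X$, the inclusion $\Var(Y) \cap \Var(\alpha) \subseteq \Var_0$ is automatic, so the only part of the variable condition that requires care is $\Var_0 \subseteq \Var(Y)$.

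First I would secure the variable condition. For each $p \in \Var_0$ choose a formula $\beta_p \in X$ with $p \in \Var(\beta_p)$, and set $Y_0 := \{\beta_p : p \in \Var_0\}$, a finite subset of $X$ with $\Var_0 \subseteq \Var(Y_0)$. Then any $Y$ with $Y_0 \subseteq Y \subseteq X$ satisfies $\Var(Y) \cap \Var(\alpha) = \Var_0$; in particular the variable set relevant to Lemma~\ref{L:one} applied to such a $Y$ is again $\Var_0$, which is what lets the restricted valuations used for $X$ and for $Y$ match up.

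Next I would invoke the hypothesis through Lemma~\ref{L:one}. From $X \modelcalMr \alpha$ we obtain, for every $\mat{M} \in \mathcal{M}$ and every restricted valuation $v_{\Var_0}$ in $\mat{M}$, the implication $v_{\Var_0} \adoptsM X \Rightarrow v_{\Var_0} \adoptsM \alpha$; contrapositively, every \emph{bad} valuation (one with $\neg(v_{\Var_0} \adoptsM \alpha)$) fails to adopt $X$. Here lies the main work: for each such bad pair $(\mat{M}, v_{\Var_0})$ I would produce a finite $Z_{\mat{M},v} \Subset X$ with $\neg(v_{\Var_0} \adoptsM Z_{\mat{M},v})$. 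This is the compactness step: viewing the extensions of $v_{\Var_0}$ as the points of the product space $|\alg{A}|^{\Var(X)}$, which is compact because $|\alg{A}|$ is finite and discrete, the condition ``validates $\beta$'' cuts out a clopen set for each $\beta \in X$; non-adoption of $X$ says these sets have empty intersection, so by the finite intersection property some finite subfamily $Z_{\mat{M},v}$ already has empty intersection, i.e. $\neg(v_{\Var_0} \adoptsM Z_{\mat{M},v})$. Since $\mathcal{M}$ is finite and each $\mat{M}$ admits only finitely many restricted valuations on the finite domain $\Var_0$, there are only finitely many bad pairs.

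Finally I would assemble $Y := Y_0 \cup \bigcup Z_{\mat{M},v}$, a finite union of finite subsets of $X$, and (adding any single formula of the nonempty $X$ if this union is empty, which also covers the degenerate case $\Var_0 = \varnothing$) a nonempty one. By the second paragraph $Y$ meets the variable condition, so it remains to verify $Y \modelcalMr \alpha$ via Lemma~\ref{L:one}: fix $\mat{M}$ and $v_{\Var_0}$ with $v_{\Var_0} \adoptsM Y$; were $v_{\Var_0}$ bad, then $Z_{\mat{M},v} \subseteq Y$ would force $v_{\Var_0} \adoptsM Z_{\mat{M},v}$ (adoption passes to subsets by restricting the witnessing extension), contradicting $\neg(v_{\Var_0} \adoptsM Z_{\mat{M},v})$; hence $v_{\Var_0} \adoptsM \alpha$, as needed. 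The only delicate point is the compactness argument, which is precisely where the hypotheses that $\mathcal{M}$ is finite and its matrices finite are consumed; everything else is bookkeeping with the adoption relation.
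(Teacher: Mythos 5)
Your proof is correct, but a caveat on the comparison: this paper does not actually prove Proposition~\ref{P:Mr-very-strongly-finitary} --- it imports it from~\cite{mur2021} (Proposition 5.1) --- so there is no in-paper argument to measure your proposal against. Judged on its own, your argument is sound and uses exactly the machinery the paper makes available: the adoption characterization of Lemma~\ref{L:one}, the finiteness of $\mathcal{M}$ and of each $|\alg{A}|$ (which gives finitely many ``bad'' pairs $(\mat{M},v_{\Var_0})$), and a compactness extraction of a finite non-adopted subset for each bad pair. You are also right that the compactness step is where all the content lies, and it genuinely cannot be avoided: already for $\mathcal{M}=\{\booleTwo\}$ and $\alpha:=q\land\neg q$ with $q\notin\Var(X)$, one has $\Var_0=\varnothing$ and $X\modelMr\alpha$ holds exactly when $X$ is unsatisfiable, so very strong finitariness in this special case \emph{is} the compactness theorem of classical propositional logic. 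Two points you leave implicit deserve a line each in a polished write-up, though neither is a gap: (i) your compactness argument shows that no extension of $v_{\Var_0}$ with domain $\Var(X)$ validates all of $Z_{\mat{M},v}$, whereas $\neg(v_{\Var_0}\adoptsM Z_{\mat{M},v})$ quantifies over extensions with the smaller domain $\Var(Z_{\mat{M},v})\cup\Var_0$; the two are equivalent because any witness on the smaller domain extends arbitrarily (using any element of the nonempty carrier $|\alg{A}|$) to $\Var(X)$ without changing the values of the formulas of $Z_{\mat{M},v}$; (ii) the space of extensions, essentially $|\alg{A}|^{\Var(X)\setminus\Var_0}$ with $|\alg{A}|$ finite discrete, is compact by Tychonoff (K\"onig's lemma suffices when $\VarL$ is countable), and each constraint ``validates $\beta$'' is clopen because it depends on the finitely many coordinates in $\Var(\beta)$.
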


In general, an operator $\C_{\mat{M}}^{r}$ can fail (con-2), and the corresponding relation $\modelMr$ fails the transitivity property:
\[
X\modelMr\beta,~\text{for every $\beta\in Y$, and $Y\cup Z\vdashC\alpha$ imply $X\cup Z\modelMr\alpha$},\tag{\emph{transitivity}}
\]
which is equivalent to the property:
\begin{equation}\label{E:trasitivity-C}
Y\subseteq\C_{\mat{M}}^{r}(X)~\text{implies}~\C_{\mat{M}}^{r}(Y\cup Z)\subseteq\C_{\mat{M}}^{r}(X\cup Z).
\end{equation}

\begin{examp}\label{EX:one}
	Let {\em$\alg{A}=\lr{\textsf{A},\land,\lor,\neg,\one}$} be a nontrivial Boolean algebra with a unit $\one$, and let $\mat{M}=\lr{\alg{A},\{\one\}}$. Then, although $p\modelMr\neg q$, we observe that $p,q\not\modelMr\neg q$; and although $p\modelMr q$ and $q\modelMr\neg p$, we observe that $p\not\modelMr\neg p$.
	
	We note that~\eqref{E:trasitivity-C} is a generalized form of {\em(con-5)}. The property {\em(con-6)} also fails. Indeed, consider inclusion $\{p\}\subseteq\{p,q\}$. Although $\{p,q\}\subseteq\C_{\mat{M}}^{r}(p)$, $\neg q\in\C_{\mat{M}}^{r}(p)\setminus\C_{\mat{M}}^{r}(\{p,q\})$.
	
	In virtue of Proposition~\ref{P:con-connections}-iv, {\em(con-7)} also fails.
\end{examp}

Thus all cumulative properties can fail for some operators $\C_{\mat{M}}^{r}$, even when the matrix $\mat{M}$ is finite, while for all $\mat{M}$, $\C_{\mat{M}}^{r}$ is weakly monotonic, and for all finite $\mat{M}$, $\C_{\mat{M}}^{r}$ is very strongly finitary.

\subsection{M-r$\ast$-consequence and $\mathcal{M}$-r$\ast$-consequence}\label{section:three.three}
Although restricted matrix consequence exhibits interesting properties, there is a problem with it.
\begin{examp}\label{EX:two}
Let $\alg{A}=\lr{\textsf{A},\rightarrow,\one}$ be a nontrivial implicative algebra; cf.~\cite{rasiowa1974}, {\em chapter II}. And let $\mat{M}=\lr{\alg{A},\{\one\}}$. Then  $p\modelMr q$ but $p, q\rightarrow q\not\modelMr q$, although $\modelM q\rightarrow q$ and $p,\alpha\modelMr q$ for any formula $\alpha$ which does not contain $q$.
\end{examp}

 This observation leads to the following consideration.

\vskip 0.1in
\begin{defn}\label{D:M-indepent-variable}
	Let $\mat{M}$ be a matrix and $\alpha$ be a formula. Given $p\in\Var(\alpha)$, $p$ is $\mat{M}$-\textbf{essential} in $($or of$)$ $\alpha$ if there are valuations $v$ and $w$ in $\mat{M}$ such that $v[\alpha]\neq w[\alpha]$, although for any $q\in\V(\alpha)\setminus\{p\}$,
	$v[q]=w[q]$; otherwise $p$ is $\mat{M}$-\textbf{inessential}. Given a family $\mathcal{M}$ of matrices, $p$  is $\mathcal{M}$-\textbf{essential} {\em(}in $\alpha${\em)} if it is $\mat{M}$-essential for some $\mat{M}\in\mathcal{M}$; otherwise $p$ is $\mathcal{M}$-\textbf{inessential}.
\end{defn}

Given a matrix $\mat{M}$ and formula $\alpha$, we denote by $\Vs{\mat{M}}(\alpha)$ the set of all $\mat{M}$-essential variables of $\alpha$; and denote:
\[
\Vs{\mat{M}}(X):=\bigcup_{\alpha\in X}\Vs{\mat{M}}(\alpha).
\]

We state the following two proposition that can be easily verified.
\begin{prop}\label{P:valuations-on-essential-variables}
	Let $\mat{M}$ be a matrix. Given $p\in\Var(\alpha)$, if $p$ is $\mat{M}$-inessential, then
	$v[\alpha]=v[\sigma(\alpha)]$ for any valuation $v$ in $\mat{M}$ and any substitution $\sigma$ satisfying the condition: $\sigma(q)=q$ whenever $q\neq p$.
\end{prop}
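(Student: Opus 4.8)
The plan is to exploit the elementary fact that a substitution followed by a valuation is again a valuation, and then to invoke the defining condition of $\mat{M}$-inessentiality directly. Concretely, I would fix a valuation $v$ in $\mat{M}$ and a substitution $\sigma$ with $\sigma(q)=q$ for every $q\neq p$. Since $\sigma$ is an endomorphism of the formula algebra $\FormAL$ and $v$ is a homomorphism from $\FormAL$ into $\alg{A}$, the composite $w:=v\circ\sigma$ is again a homomorphism from $\FormAL$ into $\alg{A}$, that is, a valuation in $\mat{M}$. By construction $w[\beta]=v[\sigma(\beta)]$ for every formula $\beta$; in particular $w[\alpha]=v[\sigma(\alpha)]$, so the proposition reduces to the single equality $v[\alpha]=w[\alpha]$.

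Next I would compare $v$ and $w$ on the variables of $\alpha$ other than $p$. For any $q\in\V(\alpha)\setminus\{p\}$ the hypothesis on $\sigma$ gives $\sigma(q)=q$, whence $w[q]=v[\sigma(q)]=v[q]$. Thus $v$ and $w$ are two valuations in $\mat{M}$ that agree on every variable of $\alpha$ distinct from $p$. Since $p$ is assumed $\mat{M}$-inessential in $\alpha$, the negation of the defining condition for essentiality applies: there cannot be two valuations that coincide on $\V(\alpha)\setminus\{p\}$ yet assign different values to $\alpha$, so necessarily $v[\alpha]=w[\alpha]$. Combining this with $w[\alpha]=v[\sigma(\alpha)]$ yields the claim.

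There is essentially no deep obstacle here; the argument is a direct unwinding of the definitions. The only step that deserves explicit care is the first one, namely verifying that $v\circ\sigma$ is genuinely a valuation, i.e.\ a homomorphism of algebras into $\alg{A}$; this is immediate once one recalls that substitutions are precisely the endomorphisms of the formula algebra and that valuations are homomorphisms into $\alg{A}$, so their composite is a homomorphism as well. A second, equally minor, point worth noting is that $v[\alpha]$ depends only on the restriction of $v$ to $\V(\alpha)$, so the agreement of $v$ and $w$ on $\V(\alpha)\setminus\{p\}$ is exactly what the definition of inessentiality requires, regardless of how $v$ and $w$ may differ on $p$ itself or on variables outside $\V(\alpha)$ (for instance on the variables introduced by $\sigma(p)$).
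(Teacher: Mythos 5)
Your proof is correct: composing $v$ with $\sigma$ yields a valuation $w=v\circ\sigma$ that agrees with $v$ on $\V(\alpha)\setminus\{p\}$, so $\mat{M}$-inessentiality of $p$ forces $v[\alpha]=w[\alpha]=v[\sigma(\alpha)]$. The paper gives no argument at all (it merely declares the proof obvious), and your direct unwinding of the definitions is exactly the intended verification, including the two points worth making explicit --- that $v\circ\sigma$ is a homomorphism and that only agreement on $\V(\alpha)\setminus\{p\}$ matters.
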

\noindent\emph{Proof}~is obvious.

\begin{prop}\label{P:valuations-equal-on-essential}
	Let $\alpha$ be a formula and $v$ and $v^\prime$ be valuations in a matrix $\mat{M}$ such that $v\upharpoonright\Vs{\mat{M}}(\alpha)=v^{\prime}\upharpoonright\Vs{\mat{M}}(\alpha)$. Then $v[\alpha]=v^{\prime}[\alpha]$.
\end{prop}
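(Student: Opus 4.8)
The plan is to reduce the claim to the single-variable statement packaged in the definition of $\mat{M}$-inessentiality, exploiting the standard fact (the homomorphism-extension property noted earlier) that a valuation's value on $\alpha$ depends only on its restriction to $\V(\alpha)$. Concretely, since $v$ and $v'$ already agree on $\Vs{\mat{M}}(\alpha)$ by hypothesis, they can differ on $\V(\alpha)$ only at inessential variables, and values on variables outside $\V(\alpha)$ are irrelevant. So I would morph $v'$ into a valuation agreeing with $v$ on all of $\V(\alpha)$ by rewriting the inessential variables one at a time, checking that no single rewrite disturbs the value of $\alpha$.

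First I would record that $\V(\alpha)$ is finite and write it as the disjoint union of $\Vs{\mat{M}}(\alpha)$ with $\{p_1,\ldots,p_k\}$, where $p_1,\ldots,p_k$ are precisely the $\mat{M}$-inessential variables occurring in $\alpha$. Then I would build a finite chain of valuations $v'=v_0,v_1,\ldots,v_k$ by letting $v_i$ coincide with $v_{i-1}$ everywhere except at $p_i$, where I set $v_i[p_i]:=v[p_i]$. For each $i$, the valuations $v_{i-1}$ and $v_i$ agree at every $q\in\V(\alpha)\setminus\{p_i\}$, so the inessentiality of $p_i$ (read contrapositively from Definition~\ref{D:M-indepent-variable}) forces $v_i[\alpha]=v_{i-1}[\alpha]$. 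Chaining these equalities gives $v'[\alpha]=v_k[\alpha]$.

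To finish, I would verify that $v_k$ and $v$ agree on the whole of $\V(\alpha)$: on each essential variable they agree because $v'$ was already equal to $v$ there by hypothesis and that coordinate was never touched, while on each $p_i$ they agree by construction. Since $v[\alpha]$ depends only on $v\upharpoonright\V(\alpha)$, this yields $v_k[\alpha]=v[\alpha]$, and therefore $v'[\alpha]=v[\alpha]$, as required.

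I expect the only delicate point to be the insistence on changing exactly one variable per step: Definition~\ref{D:M-indepent-variable} constrains only pairs of valuations differing at a single element of $\V(\alpha)$, so a simultaneous replacement of all inessential coordinates would not be directly licensed by it, whereas the telescoping single-step argument is. An alternative would route through Proposition~\ref{P:valuations-on-essential-variables}, replacing each inessential variable by a suitable substitution, but the one-at-a-time valuation chain is more transparent and avoids introducing substitutions.
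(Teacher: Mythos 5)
Your proof is correct, but it follows a different route from the paper's. The paper disposes of this proposition in one line, claiming it ``can be carried out without much efforts by induction on the construction of $\alpha$,'' whereas you avoid structural induction entirely: you telescope through a finite chain of valuations, rewriting one inessential variable per step and invoking the definition of $\mat{M}$-inessentiality (contrapositively) at each step, then close the argument with the standard fact that $v[\alpha]$ depends only on $v\upharpoonright\V(\alpha)$. Your instinct that the single-variable steps are the delicate point is exactly right, and your approach buys something real here: a naive induction on the construction of $\alpha$ runs into the obstacle that essentiality is not monotone with respect to subformulas --- a variable can be essential in a subformula $\alpha_i$ yet inessential in $\alpha$ itself (e.g.\ $p$ in $p\land\neg p$ over a Boolean matrix, where $\Vs{\mat{M}}(p\land\neg p)=\varnothing$ but $\Vs{\mat{M}}(p)=\{p\}$), so the hypothesis that $v$ and $v'$ agree on $\Vs{\mat{M}}(\alpha)$ does not transfer to $\Vs{\mat{M}}(\alpha_i)$ and the induction hypothesis cannot be applied to subformulas directly. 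The paper's sketch would need some repair or reformulation to get around this, while your chain argument sidesteps it completely; its only cost is the (harmless) reliance on finiteness of $\V(\alpha)$ to make the chain terminate.
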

\noindent\emph{Proof} can be carried out without much efforts by induction on the construction of $\alpha$.

\vskip 0.1in
These two last propositions will be used without reference.

\vskip 0.1in
The concept of an inessential variable induces an additional concept.
\begin{defn}
	Let $\mat{M}$ be a matrix and $a\in|\mat{M}|$. We add a new constant $\mathbf{c}_a$ to the language $\Lan$ along with the agreement that each valuation $v$ in $\mat{M}$, being extended to include $\mathbf{c}_a$ in its domain, satisfies the condition: $v[\mathbf{c}_a]=a$. Now, Given an $\Lan$-formula $\alpha$, any formula that is obtained from the former by replacing all $\mat{M}$-inessential variables of $\alpha$ by $\mathbf{c}_a$ is called a $\mathbf{c}_a$-instance of $\alpha$. If $\alpha$ does not contain $\mat{M}$-inessential variables, a $\mathbf{c}_a$-instance of $\alpha$ is coincident with $\alpha$.
\end{defn}

\begin{prop}
	Let $\mat{M}$ be a matrix, $a,b\in|\mat{M}|$ and $v$ be an arbitrary valuation in $\mat{M}$ extended by the conditions $v[\mathbf{c}_a]=a$ and $v[\mathbf{c}_b]=b$. If $\alpha^{\ast}$ and $\alpha^{\ast\ast}$ are $\mathbf{c}_a$-instance and $\mathbf{c}_b$-instance of $\alpha$, respectively, then $v[\alpha]=v[\alpha^{\ast}]=v[\alpha^{\ast\ast}]$.
\end{prop}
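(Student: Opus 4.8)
The plan is to recognize both $\alpha^{\ast}$ and $\alpha^{\ast\ast}$ as substitution instances of $\alpha$ and then to reduce the claim to the two propositions immediately preceding it. Write $\sigma_a$ for the substitution that sends every $\mat{M}$-inessential variable of $\alpha$ to the constant $\mathbf{c}_a$ and fixes every other variable and every constant; by the definition of a $\mathbf{c}_a$-instance we have $\alpha^{\ast}=\sigma_a(\alpha)$, and likewise $\alpha^{\ast\ast}=\sigma_b(\alpha)$ for the analogous substitution $\sigma_b$ built from $\mathbf{c}_b$. It therefore suffices to prove $v[\alpha]=v[\alpha^{\ast}]$; the equality $v[\alpha]=v[\alpha^{\ast\ast}]$ follows by the identical argument with $b$ and $\mathbf{c}_b$ in place of $a$ and $\mathbf{c}_a$, and the two together deliver the chain $v[\alpha]=v[\alpha^{\ast}]=v[\alpha^{\ast\ast}]$.

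First I would introduce the valuation $v^{\prime}$ determined on variables by $v^{\prime}[q]:=v[\sigma_a(q)]$ and extended to the formula algebra as usual, so that $v^\prime$ and $v$ agree on all constants. Since substitution is an endomorphism of the formula algebra and $v$ is a homomorphism, the composite $v\circ\sigma_a$ is the unique homomorphism agreeing with $v^\prime$ on the generators, whence $v[\alpha^{\ast}]=v[\sigma_a(\alpha)]=(v\circ\sigma_a)[\alpha]=v^{\prime}[\alpha]$. Reading off the definition of $\sigma_a$, for every $\mat{M}$-inessential variable $p$ of $\alpha$ we get $v^{\prime}[p]=v[\mathbf{c}_a]=a$, while for every other variable $q$ we get $v^{\prime}[q]=v[q]$. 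In particular $v^{\prime}$ and $v$ coincide on all variables of $\alpha$ that are \emph{not} $\mat{M}$-inessential, hence a fortiori on the set $\Vs{\mat{M}}(\alpha)$ of $\mat{M}$-essential variables of $\alpha$.

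Now I would invoke Proposition~\ref{P:valuations-equal-on-essential}: since $v^{\prime}\upharpoonright\Vs{\mat{M}}(\alpha)=v\upharpoonright\Vs{\mat{M}}(\alpha)$, it gives $v^{\prime}[\alpha]=v[\alpha]$, and combining this with $v[\alpha^{\ast}]=v^{\prime}[\alpha]$ yields $v[\alpha^{\ast}]=v[\alpha]$, as required. There is no genuine obstacle here; the only point demanding care is the bookkeeping in the passage $v[\sigma_a(\alpha)]=v^{\prime}[\alpha]$, that is, making explicit that substituting $\mathbf{c}_a$ for the inessential variables and then evaluating under $v$ is the same as evaluating $\alpha$ under the valuation that reassigns precisely those variables to $a$. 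If one prefers to avoid the composition-of-homomorphisms step, the same reduction can be carried out by applying Proposition~\ref{P:valuations-on-essential-variables} once for each inessential variable of $\alpha$ in turn, verifying along the way that a variable inessential in $\alpha$ stays inessential after the earlier replacements; I expect the direct route through Proposition~\ref{P:valuations-equal-on-essential} to be the cleaner of the two.
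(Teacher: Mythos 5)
Your proof is correct, but it takes a genuinely different route from the paper's: the paper disposes of this proposition with a one-line appeal to induction on the complexity of $\alpha$, whereas you avoid any fresh induction by factoring the claim through already-available material. Concretely, you identify the $\mathbf{c}_a$-instance as a substitution instance $\sigma_a(\alpha)$, use freeness of the formula algebra to rewrite $v[\sigma_a(\alpha)]$ as $v^{\prime}[\alpha]$ for the composite valuation $v^{\prime}=v\circ\sigma_a$, and then invoke Proposition~\ref{P:valuations-equal-on-essential}, since $v^{\prime}$ and $v$ agree on $\Vs{\mat{M}}(\alpha)$. This modular route has a real advantage: a direct induction on complexity is less innocent than the paper's phrasing suggests, because a variable that is $\mat{M}$-inessential in $\alpha$ can be $\mat{M}$-essential in its subformulas (e.g., $p$ is inessential in $p\land\neg p$ over the two-element Boolean matrix but essential in each conjunct), so the $\mathbf{c}_a$-instance of $\alpha$ is \emph{not} assembled from the $\mathbf{c}_a$-instances of its subformulas; an honest induction must fix the substitution and the modified valuation once and for all at the level of $\alpha$, which is exactly the content of your composition-of-homomorphisms step. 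What the paper's sketch buys is self-containedness; what yours buys is that all inductive content is confined to standard facts (homomorphic extension on the free formula algebra, plus Proposition~\ref{P:valuations-equal-on-essential}), and your closing remark correctly flags the pitfall of the variable-by-variable alternative via Proposition~\ref{P:valuations-on-essential-variables}, namely that inessentiality must be checked to persist after the earlier replacements.
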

\noindent\emph{Proof}~can be carried out by induction on the complexity of $\alpha$.\\

The last proposition induces the following definition.
\begin{defn}[$\mathbf{c}$-instance]\label{D:c-instance}
	Let $\mat{M}$ be a matrix. An arbitrary fixed $\mathbf{c}_a$-instance of a formula $\alpha$ is called simply a $\mathbf{c}_{\mat{M}}$-instance
	of $\alpha$ is denoted by $\alpha_{\mathbf{c}_{\mat{M}}}$. If $X\subseteq\FormsL$, we denote:
	\[
	X_{\mathbf{c}_{\mat{M}}}:=\set{\alpha_{\mathbf{c}_{\mat{M}}}}{\alpha\in X}.
	\]
\end{defn}

The next equality and the following conditional will be used throughout without reference.
\[
\V(X_{\mathbf{c}_{\mat{M}}})=\V_{\mat{M}}^{\ast}(X)~\text{for every matrix $\mat{M}$}.
\]
\[
Y\subseteq X~\Longrightarrow~\setCM{Y}\subseteq\setCM{X}~\text{for every matrix $\mat{M}$}.
\]

\begin{defn}
	Let $\mat{M}=\lr{\alg{A},D}$ be a matrix. A formula $\alpha$ is called an $\mat{M}$-\textbf{constant} if there is an element $a\in|\alg{A}|$ such that for any valuation $v$ in $\mat{M}$, $v[\alpha]=a$. Given a family $\mathcal{M}$ of matrices, $\alpha$ is an $\mathcal{M}$-\textbf{constant} if it is an $\mat{M}$-constant for each $\mat{M}\in\mathcal{M}$. 
\end{defn}

\begin{prop}\label{P:inessencial-variables}
	Let $\mathcal{M}$ be a family of matrices and $\alpha$ be a formula. If all variables of $\alpha$ are $\mathcal{M}$-inessential, then $\alpha$ is an $\mathcal{M}$-constant.
\end{prop}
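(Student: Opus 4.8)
The plan is to reduce the claim to a single matrix and then read it off Proposition~\ref{P:valuations-equal-on-essential}. Since, by definition, a variable is $\mathcal{M}$-inessential exactly when it is $\mat{M}$-inessential for every $\mat{M}\in\mathcal{M}$, and $\alpha$ is an $\mathcal{M}$-constant exactly when it is an $\mat{M}$-constant for every $\mat{M}\in\mathcal{M}$, it suffices to fix an arbitrary $\mat{M}=\lr{\alg{A},D}\in\mathcal{M}$ and show that, under the hypothesis, $\alpha$ is an $\mat{M}$-constant.

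First I would observe that the hypothesis that every variable of $\alpha$ is $\mathcal{M}$-inessential entails, for this fixed $\mat{M}$, that no variable of $\alpha$ is $\mat{M}$-essential; that is, $\Vs{\mat{M}}(\alpha)=\varnothing$. Then I would invoke Proposition~\ref{P:valuations-equal-on-essential}: since any two valuations $v$ and $v^\prime$ in $\mat{M}$ agree (vacuously) on the empty set $\Vs{\mat{M}}(\alpha)$, we have $v\upharpoonright\Vs{\mat{M}}(\alpha)=v^\prime\upharpoonright\Vs{\mat{M}}(\alpha)$, and hence $v[\alpha]=v^\prime[\alpha]$. Thus all valuations in $\mat{M}$ assign $\alpha$ one and the same value; fixing any valuation $v_0$ and setting $a:=v_0[\alpha]\in|\alg{A}|$ witnesses that $\alpha$ is an $\mat{M}$-constant. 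As $\mat{M}\in\mathcal{M}$ was arbitrary, $\alpha$ is an $\mathcal{M}$-constant.

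There is essentially no obstacle once Proposition~\ref{P:valuations-equal-on-essential} is in hand, because that proposition already performs the crucial bridge from the ``local'' definition of inessentiality (invariance of $\alpha$ under changing one variable at a time) to ``global'' invariance (equality of $v[\alpha]$ and $v^\prime[\alpha]$ for arbitrary valuations). Were this proposition not available, the step deserving care would be exactly this passage: given arbitrary $v,v^\prime$, one would interpolate a finite chain of valuations $v=u_0,u_1,\dots,u_n=v^\prime$ along an enumeration $p_1,\dots,p_n$ of $\V(\alpha)$, with $u_i$ agreeing with $v^\prime$ on $\{p_1,\dots,p_i\}$ and with $v$ on the remaining variables of $\alpha$, so that consecutive valuations differ only at a single variable of $\alpha$; $\mat{M}$-inessentiality of each $p_i$ then gives $u_{i-1}[\alpha]=u_i[\alpha]$, and transitivity of equality yields $v[\alpha]=v^\prime[\alpha]$. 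Here one also uses the standard fact that $v[\alpha]$ depends only on $v\upharpoonright\V(\alpha)$, so that the values assigned outside $\V(\alpha)$ are irrelevant.
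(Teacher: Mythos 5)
Your proof is correct and takes essentially the same approach as the paper's: fix an arbitrary $\mat{M}\in\mathcal{M}$ and show that all valuations in $\mat{M}$ assign $\alpha$ one and the same value, which is exactly what makes $\alpha$ an $\mat{M}$-constant. The paper compresses the key step into ``it is not difficult to conclude''; your explicit appeal to Proposition~\ref{P:valuations-equal-on-essential} with $\Vs{\mat{M}}(\alpha)=\varnothing$ (together with your fallback one-variable-at-a-time chain argument) fills in precisely the detail the paper leaves implicit, in keeping with its announcement that that proposition would be used without reference.
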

\begin{proof}
	Suppose $\mat{M}$ is an arbitrary matrix from $\mathcal{M}$ and $v$ is a valuation in $\mat{M}$. We denote: $a:=v[\alpha]$. Since each variable of $\alpha$ is  $\mat{M}$-inessential, it is not difficult to conclude that for any valuation $v^{\prime}$ in $\mat{M}$, $v^{\prime}[\alpha]=a$. Therefore, $\alpha$ is $\mat{M}$-constant.
\end{proof}

Refining definition~\eqref{E:M-r-consequence}, we arrive at the main definition of this subsection.

Let $\mat{M}=\lr{\alg{A},D}$ and $X\cup\{\alpha\}\subseteq\FormsL$.
\begin{equation}\label{E:M-r*-consequence}
	\begin{array}{rl}
		X\modelMrs\alpha~\define &\text{for any restricted $v$ in $\mat{M}$ with $\dom(v)=\V_{\mat{M}}^{\ast}(X)$},\\
		&\text{there is an extension $w\ge v$ with $\V(X\cup\{\alpha\})\subseteq\dom(w)$}\\
		&\text{such that if $v[X_{\mathbf{c}_{\mat{M}}}]\subseteq D$, then
		$w[\alpha]\in D$}.
	\end{array}
\end{equation}

We note:
\begin{equation}\label{E:default-statement}
	\varnothing\modelMrs\alpha~\Longleftrightarrow~\varnothing\modelMr\alpha.
\end{equation}

Similarly to~\eqref{E:calM-r-consequence}, we define the relation of $\mathcal{M}$-r$^\ast$-consequence.
\begin{equation}\label{E:calM-r*-consequence}
X\modelcalMrs\alpha~\define~X\modelMrs\alpha~\text{for every $\mat{M}\in\mathcal{M}$}.
\end{equation}

The corresponding operators are denoted by $\C_{\mat{M}}^{r\ast}$ and $\C_{\mathcal{M}}^{r\ast}$. Similarly to~\eqref{E:C_calMr}, we have:
\begin{equation}\label{E:C_calMr*}	
\CcalMrs(X)=\bigcap_{\mat{M}\in\mathcal{M}}\CmatMrs(X).
\end{equation}

\vskip 0.1in
First, we focus on M-r$^\ast$-consequence.
From definition~\eqref{E:M-r*-consequence}, there follows the following equivalence.
\begin{equation}\label{E:two-conditions}
	X\modelMrs\alpha~\Longleftrightarrow~X_{\mathbf{c}_{\mat{M}}}\modelMr\alpha.
\end{equation}

\noindent\emph{Proof}~is obvious.\\

We note that~\eqref{E:two-conditions} could be taken as a definition of relation $\modelMrs$.

\begin{prop}\label{P:Mr-stronger-Mr*}
	For any matrix $\mat{M}=\lr{\alg{A},D}$ and a set $X\cup\{\alpha\}\subseteq\FormsL$,
	\[
	X\modelMr\alpha~\Longrightarrow~X\modelMrs\alpha.
	\]
	The converse, in general, does not hold.
\end{prop}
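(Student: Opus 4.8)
The plan is to handle the forward implication by means of the equivalence~\eqref{E:two-conditions}, which turns the goal $X\modelMrs\alpha$ into $\setCM{X}\modelMr\alpha$. Thus, assuming $X\modelMr\alpha$, it suffices to establish $\setCM{X}\modelMr\alpha$. I would unfold~\eqref{E:M-r-consequence} for $\setCM{X}$, using that $\V(\setCM{X})=\Vs{\mat{M}}(X)$: fix an arbitrary restriction $v$ with $\dom(v)=\Vs{\mat{M}}(X)$ and suppose $v[\setCM{X}]\subseteq D$; the task is to exhibit an extension $w\ge v$ defined on $\V(\setCM{X}\cup\{\alpha\})$ with $w[\alpha]\in D$.

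The link to the hypothesis is obtained by extending $v$ to a restriction $v^{+}$ with $\dom(v^{+})=\V(X)$, assigning to every variable outside $\Vs{\mat{M}}(X)$ the value $a$ that the constant $\mathbf{c}_a$ denotes. The crux is to check $v^{+}[X]\subseteq D$. For each $\beta\in X$, the value of $\beta$ depends only on its $\mat{M}$-essential variables by Proposition~\ref{P:valuations-equal-on-essential}, and those lie in $\Vs{\mat{M}}(X)=\dom(v)$, where $v^{+}$ agrees with $v$; since the inessential positions of $\beta$ are precisely the ones replaced by $\mathbf{c}_a$ in $\beta_{\mathbf{c}_{\mat{M}}}$, one gets $v^{+}[\beta]=v[\beta_{\mathbf{c}_{\mat{M}}}]$, whence $v^{+}[X]=v[\setCM{X}]\subseteq D$. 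Applying $X\modelMr\alpha$ to $v^{+}$ furnishes an extension $w\ge v^{+}\ge v$ defined on $\V(X\cup\{\alpha\})\supseteq\V(\setCM{X}\cup\{\alpha\})$ with $w[\alpha]\in D$; this $w$ is the required witness, so $\setCM{X}\modelMr\alpha$ and hence $X\modelMrs\alpha$.

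I expect the main obstacle to be exactly this bookkeeping with $\mat{M}$-inessential variables and the constant $\mathbf{c}_a$: one must be careful that a variable inessential in one $\beta\in X$ may still be essential in another, so that the equality $v^{+}[\beta]=v[\beta_{\mathbf{c}_{\mat{M}}}]$ must be justified via dependence on essential variables only (Proposition~\ref{P:valuations-equal-on-essential}) rather than by naive substitution, and that the domains match up, $\Vs{\mat{M}}(X)\subseteq\V(X)$ and $\V(\setCM{X}\cup\{\alpha\})\subseteq\V(X\cup\{\alpha\})$, so that the witness from the hypothesis is admissible.

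For the failure of the converse I would reuse Example~\ref{EX:two} with $X=\{p,\,q\rightarrow q\}$ and $\alpha=q$ over the implicative matrix $\mat{M}=\lr{\alg{A},\{\one\}}$, where $X\not\modelMr q$ is already recorded. Since $q\rightarrow q$ is an $\mat{M}$-constant of value $\one$, the variable $q$ is $\mat{M}$-inessential in it, so $\Vs{\mat{M}}(X)=\{p\}$ and $\setCM{X}=\{p,\,\mathbf{c}_a\rightarrow\mathbf{c}_a\}$. As $\mathbf{c}_a\rightarrow\mathbf{c}_a$ always evaluates to $\one\in D$, the relation $\setCM{X}\modelMr q$ collapses to $p\modelMr q$, which holds; by~\eqref{E:two-conditions} this gives $X\modelMrs q$. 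Thus $X\modelMrs q$ while $X\not\modelMr q$, so the converse fails.
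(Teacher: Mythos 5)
Your proof is correct and takes essentially the same route as the paper: the forward direction extends a valuation on $\Vs{\mat{M}}(X)$ to $\V(X)$ by assigning the constant's value $a$ to the inessential variables, applies the hypothesis $X\modelMr\alpha$, and concludes via~\eqref{E:two-conditions}, exactly as in the paper's proof, while the converse is refuted by Example~\ref{EX:two}, which is precisely the paper's cited counterexample. Your extra bookkeeping (a variable inessential in one formula of $X$ may be essential in another) and the explicit verification that $\{p,\,q\rightarrow q\}\modelMrs q$ only spell out steps the paper leaves as ``clear.''
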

\begin{proof}
	Assume that a valuation $v$ in $\mat{M}$ with $\dom(v)=\Vs{\mat{M}}(X)$ satisfies the condition that $v[X_{\mathbf{c}_{\mat{M}}}]\subseteq D$. We recall that $v[\mathbf{c}_{\mat{M}}]=a$, where
	$a\in|\alg{A}|$. Then, we extend $v$ to a restriction $v^\prime$ with $\dom(v^\prime)=\V(X)$ in such a way that $v^\prime[p]=a$ for any inessential variable $p$ of $X$. It is clear that $v^\prime[X]\subseteq D$. This, by premise, implies that there is an extension $v^{\prime\prime}$ with $\dom(v^{\prime\prime})=X\cup\{\alpha\}$ such that $v^{\prime\prime}[\alpha]\in D$. Thus, we obtain that $X_{\mathbf{c}_{\mat{M}}}\modelMr\alpha$. It remains to apply~\eqref{E:two-conditions}.
	
	Example~\ref{EX:two} shows that the converse can fail.
\end{proof}

\begin{prop}\label{P:Mrs-logical}
Relation $\modelMrs$ is logical relative to $\modelM$.
\end{prop}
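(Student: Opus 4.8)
The plan is to verify the two defining conditions of logicality, (log-1) and (log-2), for the pair $\Cn_{\mat{M}}$ (the operator of $\modelM$) and $\C_{\mat{M}}^{r\ast}$ (the operator of $\modelMrs$). The principal tool is the equivalence~\eqref{E:two-conditions}, namely $X\modelMrs\alpha\Longleftrightarrow\setCM{X}\modelMr\alpha$, which lets us transport what is already known about $\modelMr$ to $\modelMrs$ by passing to $\mathbf{c}_{\mat{M}}$-instances; the work then rests entirely on Propositions~\ref{P:Mr-logical} and~\ref{P:Mr-stronger-Mr*}.

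For (log-1), that is $X\modelM\alpha\Longrightarrow X\modelMrs\alpha$, I would chain two inclusions. First, $\Cn_{\mat{M}}\le\C_{\mat{M}}^{r}$, i.e. $X\modelM\alpha\Longrightarrow X\modelMr\alpha$, holds trivially: given a restriction $v$ with $\dom(v)=\V(X)$ and $v[X]\subseteq D$, any extension $w$ with $\V(X\cup\{\alpha\})\subseteq\dom(w)$ satisfies $w[X]=v[X]\subseteq D$, so $X\modelM\alpha$ forces $w[\alpha]\in D$, which witnesses $X\modelMr\alpha$. (This is exactly the ``obviously true'' (log-1) invoked in the proof of Proposition~\ref{P:Mr-logical}.) Second, Proposition~\ref{P:Mr-stronger-Mr*} gives $X\modelMr\alpha\Longrightarrow X\modelMrs\alpha$. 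Composing the two yields (log-1).

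For (log-2), I would assume $X\modelMrs\alpha$ and $\alpha\modelM\beta$, and deduce $X\modelMrs\beta$. By~\eqref{E:two-conditions} the first assumption is equivalent to $\setCM{X}\modelMr\alpha$. I then apply the (log-2) already established for $\modelMr$ in Proposition~\ref{P:Mr-logical}---which states that $Y\modelMr\alpha$ and $\alpha\modelM\beta$ imply $Y\modelMr\beta$ for every set $Y$---with $Y:=\setCM{X}$. This delivers $\setCM{X}\modelMr\beta$, and a second use of~\eqref{E:two-conditions}, now for the formula $\beta$, converts it back to $X\modelMrs\beta$, as required.

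The step I expect to carry the real weight is confirming that the reduction through~\eqref{E:two-conditions} is legitimate at both ends: one must check that the equivalence applies uniformly, to the premise $\alpha$ as well as to the conclusion $\beta$ (it does, since~\eqref{E:two-conditions} is stated for an arbitrary right-hand formula and an arbitrary left-hand set), and that the monotonic hypothesis $\alpha\modelM\beta$ is precisely the one feeding the (log-2) of Proposition~\ref{P:Mr-logical}. Once these bookkeeping points are settled, no further computation with essential variables or $\mathbf{c}_{\mat{M}}$-instances is needed here, since that analysis has already been absorbed into Propositions~\ref{P:Mr-logical} and~\ref{P:Mr-stronger-Mr*}. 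An alternative, self-contained route would mirror the proof of Proposition~\ref{P:Mr-logical} directly, constructing the witnessing valuation $w$ by hand over $\Vs{\mat{M}}(X)$ and exploiting the fact that formulas all of whose variables are inessential are $\mat{M}$-constants (Proposition~\ref{P:inessencial-variables}); but the reduction above is shorter and isolates exactly what is new.
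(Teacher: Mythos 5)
Your proof is correct and follows essentially the same route as the paper's: (log-1) is obtained by composing Proposition~\ref{P:Mr-logical} with Proposition~\ref{P:Mr-stronger-Mr*}, and (log-2) by transporting the hypothesis through the equivalence~\eqref{E:two-conditions} to $\setCM{X}\modelMr\alpha$, applying the (log-2) of Proposition~\ref{P:Mr-logical}, and converting back. Your added remark that $\setCM{X}$ does not depend on the right-hand formula, so~\eqref{E:two-conditions} applies uniformly to $\alpha$ and $\beta$, is a correct bookkeeping point that the paper leaves implicit.
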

\begin{proof}
Indeed, property~\eqref{E:minimum-condition-for-logical-1} follows from Proposition~\ref{P:Mr-logical} and Proposition~\ref{P:Mr-stronger-Mr*}.

To prove property~\eqref{E:minimum-condition-for-logical-2}, we assume that $X\modelMrs\alpha$ and $\alpha\modelM\beta$.
In virtue of~\eqref{E:two-conditions}, $X_{\mathbf{c}_{\mat{M}}}\modelMr\alpha$. According to Proposition~\ref{P:Mr-logical}, $X_{\mathbf{c}_{\mat{M}}}\modelMr\beta$. Then we apply~\eqref{E:two-conditions}.
\end{proof}

We should not expect that relation $\modelMrs$ will enjoy the weak monotonicity property (Definition~\ref{D:weakly-monotonic}). However, we obtain the following.
\begin{prop}\label{P:weak-monotonicity-for-Mrs}
Given a matrix $\mat{M}$, for any set $X\cup Y\cup\{\alpha\}\subseteq\FormsL$ with $X\subseteq Y$ and
$\Vs{M}(X)\cap\V(\alpha)=\Vs{M}(Y)\cap\V(\alpha)$, $X\modelMrs\alpha$ implies $Y\modelMrs\alpha$.
\end{prop}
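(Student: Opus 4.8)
The plan is to reduce the statement to the weak monotonicity already established for M-r-consequence. First I would apply the equivalence~\eqref{E:two-conditions}, which converts the hypothesis $X\modelMrs\alpha$ into $\setCM{X}\modelMr\alpha$ and reduces the desired conclusion $Y\modelMrs\alpha$ to $\setCM{Y}\modelMr\alpha$. Thus it suffices to show that $\setCM{X}\modelMr\alpha$ implies $\setCM{Y}\modelMr\alpha$, i.e.\ that the (unstarred) relation $\modelMr$ behaves monotonically when passing from $\setCM{X}$ to $\setCM{Y}$.

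For this I would invoke Proposition~\ref{P:Mr-weakly-monotonic}, applied to the one-element family $\mathcal{M}=\{\mat{M}\}$, so that $\modelMr$ is weakly monotonic in the sense of Definition~\ref{D:weakly-monotonic}. To apply it to the pair $\setCM{X}$, $\setCM{Y}$ I must verify the two premises of that definition. The inclusion $\setCM{X}\subseteq\setCM{Y}$ follows from $X\subseteq Y$ by the monotonicity of the $\mathbf{c}_{\mat{M}}$-instance operation recorded above. The variable condition $\V(\setCM{X})\cap\V(\alpha)=\V(\setCM{Y})\cap\V(\alpha)$ is exactly where the essential-variable hypothesis enters: using the equality $\V(\setCM{Z})=\Vs{M}(Z)$ noted above, this condition becomes $\Vs{M}(X)\cap\V(\alpha)=\Vs{M}(Y)\cap\V(\alpha)$, which is precisely the hypothesis of the proposition. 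Having checked both premises, weak monotonicity of $\modelMr$ yields $\setCM{Y}\modelMr\alpha$, and a final application of~\eqref{E:two-conditions} gives $Y\modelMrs\alpha$, as required.

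I do not anticipate a genuine obstacle here: the whole content of the argument is the bookkeeping translation between the starred and unstarred relations through~\eqref{E:two-conditions}, together with the identification $\V(\setCM{Z})=\Vs{M}(Z)$, which converts ordinary variables of the $\mathbf{c}_{\mat{M}}$-instances into $\mat{M}$-essential variables of the originals. The one point that requires a moment's care is that the conclusion $\alpha$ on the right-hand side of~\eqref{E:two-conditions} is left unchanged (it is not replaced by a $\mathbf{c}_{\mat{M}}$-instance), so that $\V(\alpha)$ appearing in the weak-monotonicity premise really is the variable set of the original conclusion; this is what makes the translated condition coincide verbatim with the stated hypothesis $\Vs{M}(X)\cap\V(\alpha)=\Vs{M}(Y)\cap\V(\alpha)$.
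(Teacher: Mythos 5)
Your proposal is correct and follows essentially the same route as the paper's own proof: both reduce the statement via the equivalence~\eqref{E:two-conditions} to weak monotonicity of $\modelMr$ (Proposition~\ref{P:Mr-weakly-monotonic}), checking the inclusion $\setCM{X}\subseteq\setCM{Y}$ and the variable condition through the identity $\V(\setCM{Z})=\Vs{\mat{M}}(Z)$. Your closing remark about $\alpha$ remaining untranslated on the right-hand side of~\eqref{E:two-conditions} is a point the paper leaves implicit, but the argument is the same.
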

\begin{proof}
	Let $X\modelMrs\alpha$ and all other premises be fulfilled. Then, in virtue of~\eqref{E:two-conditions}, $\setCM{X}\modelMr\alpha$. Since $\V(\setCM{X})\cap\V(\alpha)=\V(\setCM{Y})\cap\V(\alpha)$
	and $\setCM{X}\subseteq\setCM{Y}$, we can apply the property that $\modelMr$ is weakly monotonic (Proposition~\ref{P:Mr-weakly-monotonic}) in order to conclude that
	$\setCM{Y}\modelMr\alpha$, that is, $Y\modelMrs\alpha$.
\end{proof}

As to finitariness, we obtain the following proposition.
\begin{prop}\label{P:Mrs-strong-finitariness}
Given a matrix $\mat{M}$, for any nonempty $X\subseteq\FormsL$, if $X\modelMrs\alpha$, then there exists a nonempty $Y\Subset X$ such that $\Vs{M}(Y)\cap\V(\alpha)=\Vs{M}(X)\cap\V(\alpha)$ and $Y\modelMrs\alpha$. Moreover, for any set $Z$ with $Y\subseteq Z$ and $\Vs{M}(Z)\cap\V(\alpha)=\Vs{M}(Y)\cap\V(\alpha)$, $Z\modelMrs\alpha$.
\end{prop}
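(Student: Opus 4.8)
The plan is to transfer the very strong finitariness already proved for $\modelMr$ over to $\modelMrs$ through the bridge equivalence \eqref{E:two-conditions}, and then to read off the ``moreover'' clause directly from the weak monotonicity of $\modelMrs$ established in Proposition \ref{P:weak-monotonicity-for-Mrs}. First, from the hypothesis $X\modelMrs\alpha$ with $X$ nonempty I would pass, via \eqref{E:two-conditions}, to $\setCM{X}\modelMr\alpha$, noting that $\setCM{X}$ is again nonempty. At this point I would apply the very strong finitariness of $\modelMr$ (Proposition \ref{P:Mr-very-strongly-finitary}) to produce a nonempty finite $W\Subset\setCM{X}$ satisfying $\V(W)\cap\V(\alpha)=\V(\setCM{X})\cap\V(\alpha)$ and $W\modelMr\alpha$.

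The one step that needs genuine care is converting this witness back to a subset of the original $X$. Since $W\subseteq\setCM{X}=\set{\beta_{\mathbf{c}_{\mat{M}}}}{\beta\in X}$, every member of $W$ is the $\mathbf{c}_{\mat{M}}$-instance of some formula of $X$; because the $\mathbf{c}_{\mat{M}}$-instance map need not be injective, I would \emph{choose}, for each $\gamma\in W$, one preimage $\beta\in X$ and collect these into a finite nonempty $Y\Subset X$ with $\setCM{Y}=W$. Feeding $\setCM{Y}=W$ back through \eqref{E:two-conditions} turns $W\modelMr\alpha$ into $Y\modelMrs\alpha$, while the standing identities $\V(\setCM{Y})=\Vs{M}(Y)$ and $\V(\setCM{X})=\Vs{M}(X)$ convert the variable equality on $W$ into $\Vs{M}(Y)\cap\V(\alpha)=\Vs{M}(X)\cap\V(\alpha)$. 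This delivers the first assertion.

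For the ``moreover'' clause I would simply invoke weak monotonicity: given $Z\supseteq Y$ with $\Vs{M}(Z)\cap\V(\alpha)=\Vs{M}(Y)\cap\V(\alpha)$, Proposition \ref{P:weak-monotonicity-for-Mrs} applied to $Y\modelMrs\alpha$ (with $Y$ playing the role of the smaller set and $Z$ the larger) yields $Z\modelMrs\alpha$. I expect the main obstacle to be twofold: the back-translation through the non-injective $\mathbf{c}_{\mat{M}}$-instance map, and the fact that invoking Proposition \ref{P:Mr-very-strongly-finitary} tacitly requires $\mat{M}$ to be finite, so that the existence-of-a-finite-$Y$ half of the statement should presumably be read under that hypothesis; the remaining manipulations are routine bookkeeping with \eqref{E:two-conditions} and the two standing identities.
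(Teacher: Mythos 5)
Your proof is correct and follows essentially the same route as the paper's: pass from $X\modelMrs\alpha$ to $\setCM{X}\modelMr\alpha$ via \eqref{E:two-conditions}, apply Proposition~\ref{P:Mr-very-strongly-finitary} to get a finite witness inside $\setCM{X}$, pull it back to a nonempty $Y\Subset X$ with $\setCM{Y}$ equal to that witness, translate back through \eqref{E:two-conditions} and the identity $\V(\setCM{X})=\Vs{\mat{M}}(X)$, and settle the ``moreover'' clause by Proposition~\ref{P:weak-monotonicity-for-Mrs}. Both of your flagged concerns are sound but do not change the verdict: the paper disposes of the non-injective pull-back with a bare ``it must be clear'' (your choice-of-preimages argument is exactly what is meant), and the tacit finiteness of $\mat{M}$ needed to invoke Proposition~\ref{P:Mr-very-strongly-finitary} is equally tacit in the paper's own proof, with the hypothesis reappearing explicitly only in Proposition~\ref{P:calMrs-strongly-finitariness}.
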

\begin{proof}
	Suppose $X\modelMrs\alpha$, where $X\neq\varnothing$. By~\eqref{E:two-conditions}, $\setCM{X}\modelMr\alpha$.
	In virtue of Proposition~\ref{P:Mr-very-strongly-finitary}, there is a nonempty $X_{0}\Subset\setCM{X}$ such that $\V(X_0)\cap\V(\alpha)=\V(\setCM{X})\cap\V(\alpha)$ and
	$X_{0}\modelMr\alpha$.
	
	It must be clear that for some nonempty $Y\Subset X$, $\setCM{Y}=X_0$. This implies that $\V(\setCM{Y})\cap\V(\alpha)=\V(\setCM{X})\cap\V(\alpha)$ and
	$\setCM{Y}\modelMr\alpha$. Applying~\eqref{E:two-conditions}, we obtain that
	$\Vs{M}(Y)\cap\V(\alpha)=\Vs{M}(X)\cap\V(\alpha)$.
	
	The last part of the statement follows by Proposition~\ref{P:weak-monotonicity-for-Mrs}.
\end{proof}

\vskip 0.1in
Now we will turn to $\mathcal{M}$-r$\ast$-consequence. 
First of all, we observe the following.
\begin{prop}
	Any $\mathcal{M}$-r$\ast$-consequence is logical relative to the corresponding $\mathcal{M}$-consequence.
\end{prop}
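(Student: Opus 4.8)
The plan is to reduce the statement about families $\mathcal{M}$ to the single-matrix case already established in Proposition~\ref{P:Mrs-logical}, exploiting the intersection formula~\eqref{E:C_calMr*} together with the defining conjunctions~\eqref{E:calM-consequence} and~\eqref{E:calM-r*-consequence}. Concretely, I must verify the two conditions defining logicality: property~\eqref{E:minimum-condition-for-logical-1}, which is $\modelcalM\;\subseteq\;\modelcalMrs$ (equivalently $\Cn_{\mathcal{M}}\le\CcalMrs$), and property~\eqref{E:minimum-condition-for-logical-2}, that is, $X\modelcalMrs\alpha$ and $\alpha\modelcalM\beta$ together imply $X\modelcalMrs\beta$.

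For~\eqref{E:minimum-condition-for-logical-1}, I would argue pointwise over the family. Suppose $X\modelcalM\alpha$. By~\eqref{E:calM-consequence} this means $X\modelM\alpha$ for every $\mat{M}\in\mathcal{M}$. Fixing such an $\mat{M}$, Proposition~\ref{P:Mrs-logical} tells us that $\modelMrs$ is logical relative to $\modelM$, so its property~\eqref{E:minimum-condition-for-logical-1} yields $X\modelMrs\alpha$. Since $\mat{M}$ was arbitrary, definition~\eqref{E:calM-r*-consequence} gives $X\modelcalMrs\alpha$, as required.

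For~\eqref{E:minimum-condition-for-logical-2}, assume $X\modelcalMrs\alpha$ and $\alpha\modelcalM\beta$. Again fix an arbitrary $\mat{M}\in\mathcal{M}$. From~\eqref{E:calM-r*-consequence} we extract $X\modelMrs\alpha$, and from~\eqref{E:calM-consequence} we extract $\alpha\modelM\beta$. Now the single-matrix result, Proposition~\ref{P:Mrs-logical}, supplies its property~\eqref{E:minimum-condition-for-logical-2} for this $\mat{M}$, so $X\modelMrs\beta$. As $\mat{M}$ ranges over all of $\mathcal{M}$, definition~\eqref{E:calM-r*-consequence} delivers $X\modelcalMrs\beta$, completing the verification.

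This argument is essentially bookkeeping: the genuine content lives in Proposition~\ref{P:Mrs-logical}, and the family-level statement follows because logicality is a conjunction of two universally quantified conditions that distribute over the defining conjunctions in~\eqref{E:calM-consequence} and~\eqref{E:calM-r*-consequence}. I do not anticipate a real obstacle; the only point requiring a modicum of care is to confirm that the \emph{same} matrix $\mat{M}$ is used simultaneously for the $\mathcal{M}$-r$\ast$-premise and the $\mathcal{M}$-consequence premise when invoking~\eqref{E:minimum-condition-for-logical-2}, which is exactly what fixing $\mat{M}$ before unpacking both definitions guarantees.
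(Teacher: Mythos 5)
Your proof is correct and takes essentially the same route as the paper: both arguments reduce the family-level claim to the single-matrix Proposition~\ref{P:Mrs-logical} and then let the two logicality conditions distribute over the defining conjunctions~\eqref{E:calM-consequence} and~\eqref{E:calM-r*-consequence}. The only cosmetic difference is that the paper states the (log-1) half in operator form via the intersection identities~\eqref{E:Cn_calM} and~\eqref{E:C_calMr*}, which is just the operator-level rendering of your pointwise argument.
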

\begin{proof}
	To verify (log-1), we notice that, in virtue of Proposition~\ref{P:Mrs-logical}, for every $\mat{M}\in\mathcal{M}$, $\Cn_{\mat{M}}\le\C_{\mat{M}}^{r\ast}$. Then, in virtue 
	of~\eqref{E:C_calMr} and~\eqref{E:C_calMr*}, $\Cn_{\mathcal{M}}\le\CcalMrs$.
	
To verify (log-2), assume that $\alpha\in\CcalMrs(X)$ and $\beta\in\Cn_{\mathcal{M}}(\alpha)$, that is, $X\modelcalMrs\alpha$ and $\alpha\modelcalM\beta$. In virtue of Proposition~\ref{P:Mrs-logical}, for every $\mat{M}\in\mathcal{M}$, $X\modelMrs\beta$, that is, $\beta\in\CcalMrs(X)$.
\end{proof}

\begin{prop}\label{P:weak-monotonicity-for-calMrs}
	If $X\modelcalMrs\alpha$, then for any set $Y$ with $X\subseteq Y$ and $\Vs{\mat{M}}(Y)\cap\V(\alpha)=\Vs{\mat{M}}(X)\cap\V(\alpha)$ for every $\mat{M}\in\mathcal{M}$, $Y\modelcalMrs\alpha$.
\end{prop}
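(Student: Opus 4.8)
The plan is to reduce the claim to the single-matrix statement already established in Proposition~\ref{P:weak-monotonicity-for-Mrs}, exploiting the fact that $\mathcal{M}$-r$\ast$-consequence is, by definition~\eqref{E:calM-r*-consequence}, nothing more than the conjunction of the M-r$\ast$-consequences over all members of the family. So the whole argument is a matter of unfolding one definition, applying a known result matrix by matrix, and folding the definition back up.

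First I would unfold the hypothesis: by~\eqref{E:calM-r*-consequence}, the assumption $X\modelcalMrs\alpha$ means precisely that $X\modelMrs\alpha$ holds for every $\mat{M}\in\mathcal{M}$. Next I would fix an arbitrary matrix $\mat{M}\in\mathcal{M}$ and check that the hypotheses of Proposition~\ref{P:weak-monotonicity-for-Mrs} are met for this particular $\mat{M}$. Indeed, we are given $X\subseteq Y$, and the standing assumption supplies $\Vs{\mat{M}}(Y)\cap\V(\alpha)=\Vs{\mat{M}}(X)\cap\V(\alpha)$ for \emph{every} member of the family, in particular for the chosen $\mat{M}$. Hence Proposition~\ref{P:weak-monotonicity-for-Mrs} applies and yields $Y\modelMrs\alpha$.

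Finally, since the matrix $\mat{M}$ was arbitrary, the relation $Y\modelMrs\alpha$ holds for every $\mat{M}\in\mathcal{M}$, and so one more appeal to definition~\eqref{E:calM-r*-consequence} gives $Y\modelcalMrs\alpha$, as required. There is no genuine obstacle here: the only point worth flagging is that the variable-coincidence hypothesis is imposed \emph{uniformly} over all $\mat{M}\in\mathcal{M}$, which is exactly the quantifier strength needed to invoke the single-matrix proposition separately for each matrix. Were the coincidence assumed only for some matrices, the reduction would break down, so the uniform phrasing of the hypothesis is doing the essential work.
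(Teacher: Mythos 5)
Your proof is correct and is exactly the argument the paper intends: the paper's own proof simply says it follows immediately from Proposition~\ref{P:weak-monotonicity-for-Mrs}, and your unfolding of definition~\eqref{E:calM-r*-consequence}, matrix-by-matrix application of that proposition, and refolding is precisely that reduction spelled out. Your remark that the uniform (over all $\mat{M}\in\mathcal{M}$) variable-coincidence hypothesis is what licenses the per-matrix application is a fair observation, but it does not change the route.
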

\noindent\textit{Proof}~immediately follows from Proposition~\ref{P:weak-monotonicity-for-Mrs}.\\ 

\begin{prop}\label{P:calMrs-strongly-finitariness}
	Let $\mathcal{M}$ be a finite family of finite matrices.
	For any nonempty $X\subseteq\FormsL$, if $X\modelcalMrs\alpha$, then there exists a nonempty $Y\Subset X$ such that $Y\modelcalMrs\alpha$ and $\Vs{\mat{M}}(Y)\cap\V(\alpha)=\Vs{\mat{M}}(X)\cap\V(\alpha)$ for every $\mat{M}\in\mathcal{M}$. Moreover, for any set $Z$ with $Y\subseteq Z$ and $\Vs{\mat{M}}(Z)\cap\V(\alpha)=\Vs{\mat{M}}(Y)\cap\V(\alpha)$ for every $\mat{M}\in\mathcal{M}$, $Z\modelcalMrs\alpha$.
\end{prop}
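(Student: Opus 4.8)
The plan is to bootstrap the single-matrix result, Proposition~\ref{P:Mrs-strong-finitariness}, to the whole family by exploiting the finiteness of $\mathcal{M}$. Suppose $X\modelcalMrs\alpha$ with $X\neq\varnothing$; by~\eqref{E:calM-r*-consequence} this unpacks to $X\modelMrs\alpha$ for every $\mat{M}\in\mathcal{M}$. First I would fix an arbitrary $\mat{M}\in\mathcal{M}$ and, since each matrix of $\mathcal{M}$ is finite, invoke Proposition~\ref{P:Mrs-strong-finitariness} to extract a nonempty $Y_{\mat{M}}\Subset X$ with $\Vs{M}(Y_{\mat{M}})\cap\V(\alpha)=\Vs{M}(X)\cap\V(\alpha)$ and $Y_{\mat{M}}\modelMrs\alpha$. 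The natural candidate for the common witness is then the union $Y:=\bigcup_{\mat{M}\in\mathcal{M}}Y_{\mat{M}}$, which is a nonempty finite subset of $X$ precisely because $\mathcal{M}$ is finite and each $Y_{\mat{M}}$ is finite and nonempty.

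Next I would verify, for an arbitrary fixed $\mat{M}\in\mathcal{M}$, the two clauses the statement demands of $Y$. For the essential-variable equality, I would use the monotonicity of $\Vs{M}(\cdot)$ together with $Y_{\mat{M}}\subseteq Y\subseteq X$ to obtain the chain
\[
\Vs{M}(Y_{\mat{M}})\cap\V(\alpha)\subseteq\Vs{M}(Y)\cap\V(\alpha)\subseteq\Vs{M}(X)\cap\V(\alpha);
\]
since its two ends agree by the choice of $Y_{\mat{M}}$, a squeeze forces $\Vs{M}(Y)\cap\V(\alpha)=\Vs{M}(X)\cap\V(\alpha)$, and in particular $\Vs{M}(Y)\cap\V(\alpha)=\Vs{M}(Y_{\mat{M}})\cap\V(\alpha)$. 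With this equality in hand, I would lift $Y_{\mat{M}}\modelMrs\alpha$ to $Y\modelMrs\alpha$ by the weak monotonicity of $\modelMrs$ (Proposition~\ref{P:weak-monotonicity-for-Mrs}), applied with $Y_{\mat{M}}$ and $Y$ in the roles of its $X$ and $Y$. Since $\mat{M}$ was arbitrary, both the essential-variable equalities and, via~\eqref{E:calM-r*-consequence}, the relation $Y\modelcalMrs\alpha$ follow, settling the first assertion.

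The \emph{moreover} clause is then immediate: for any $Z\supseteq Y$ with $\Vs{M}(Z)\cap\V(\alpha)=\Vs{M}(Y)\cap\V(\alpha)$ for every $\mat{M}\in\mathcal{M}$, I would simply apply Proposition~\ref{P:weak-monotonicity-for-calMrs} with $Y$ and $Z$ in place of its $X$ and $Y$ to conclude $Z\modelcalMrs\alpha$. The only genuine obstacle is the coordination step: unlike the single-matrix case, I must produce one finite $Y$ that serves every matrix at once, and the essential sets $\Vs{M}(Y)$ differ from matrix to matrix. Finiteness of $\mathcal{M}$ resolves this by letting the union stay finite, but one must be careful to establish the essential-variable equality first, as it is exactly the hypothesis that the weak-monotonicity step needs in order to carry each $Y_{\mat{M}}\modelMrs\alpha$ up to $Y\modelMrs\alpha$.
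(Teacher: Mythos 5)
Your proof is correct and takes essentially the same route as the paper's own: both extract a witness $Y_{\mat{M}}\Subset X$ for each matrix via Proposition~\ref{P:Mrs-strong-finitariness}, take the finite union $Y$, establish the essential-variable equalities using monotonicity of $\Vs{\mat{M}}(\cdot)$, lift each $Y_{\mat{M}}\modelMrs\alpha$ to $Y\modelMrs\alpha$ by Proposition~\ref{P:weak-monotonicity-for-Mrs}, and dispatch the \emph{moreover} clause with Proposition~\ref{P:weak-monotonicity-for-calMrs}. If anything, your explicit squeeze argument spells out a step the paper leaves implicit.
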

\begin{proof}
	Suppose $\mathcal{M}=\{\mat{M}_1,\ldots,\mat{M}_n\}$ and $X\modelcalMrs\alpha$, that is,
	\[
	X\models^{r\ast}_{\mat{M}_i}\alpha~\text{for each $i\in\{1,\ldots,n\}$},
	\]
	where $X\neq\varnothing$.
	
	According to Proposition~\ref{P:Mrs-strong-finitariness}, for each $i$, there is a nonempty $Y_i\Subset  X$ such that $\V^{\ast}_{\mat{M}_i}(Y_i)\cap\{\alpha\}=\V^{\ast}_{\mat{M}_i}(X)\cap\V(\alpha)$. We also note that $\V^{\ast}_{\mat{M}_i}(Y_j)\cap\{\alpha\}\subseteq\V^{\ast}_{\mat{M}_i}(X)\cap\V(\alpha)$ for any $i,j\in\{1,\ldots,n\}$.
	
	Next we denote:
	\[
	Y:=Y_1\cup\ldots\cup Y_n.
	\]
	
	We observe that:
	\begin{itemize}
		\item $Y\neq\varnothing$ and $Y\Subset X$;
		\item $\V^{\ast}_{\mat{M}_i}(Y)\cap\{\alpha\}=\V^{\ast}_{\mat{M}_i}(X)\cap\V(\alpha)$ for each $i\in\{1,\ldots,n\}$.
	\end{itemize}
	Applying Proposition~\ref{P:weak-monotonicity-for-Mrs}, we conclude that 
	\[
	Y\models^{r\ast}_{\mat{M}_i}\alpha~\text{for each $i\in\{1,\ldots,n\}$,}
	\]
	that is, $Y\modelcalMrs\alpha$.
	
	Now if for each $i\in\{1,\ldots,n\}$, for some $Z$ with
	$Y\subseteq Z$ and $\Vs{\mat{M}_i}(Z)\cap\V(\alpha)=\Vs{\mat{M}_i}(Y)\cap\V(\alpha)$, then, according to Proposition~\ref{P:weak-monotonicity-for-calMrs}, $Z\models^{r\ast}_{\mat{M}_i}\alpha$ for each $\mat{M}_{i}\in\mathcal{M}$, that is, $Z\modelcalMrs\alpha$.
\end{proof}

\begin{cor}
	Let $\mathcal{M}$ be a finite family of finite matrices.
	Then $\mathcal{M}$-r$\ast$-consequence is strongly finitary.
\end{cor}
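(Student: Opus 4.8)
The plan is to recognize that the corollary is nothing more than the first assertion of Proposition~\ref{P:calMrs-strongly-finitariness} with its auxiliary clause on essential variables discarded. First I would unwind the meaning of strong finitariness. Property {\em(con-9)} states that whenever $X\neq\varnothing$ one has $\CcalMrs(X)\subseteq\bigcup\set{\CcalMrs(Y)}{Y\neq\varnothing~\text{and}~Y\Subset X}$; translating this through the relation--operator correspondence~\eqref{E:relation-operator}--\eqref{E:operator-relation}, it amounts exactly to the assertion that for every nonempty $X$ and every $\alpha$, if $X\modelcalMrs\alpha$, then $Y\modelcalMrs\alpha$ for some nonempty $Y\Subset X$. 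So the goal is to produce such a finite $Y$.

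Next I would simply invoke Proposition~\ref{P:calMrs-strongly-finitariness}. Under the standing hypothesis that $\mathcal{M}$ is a finite family of finite matrices, that proposition guarantees, for any nonempty $X$ with $X\modelcalMrs\alpha$, the existence of a nonempty $Y\Subset X$ satisfying $Y\modelcalMrs\alpha$ together with the equalities $\Vs{\mat{M}}(Y)\cap\V(\alpha)=\Vs{\mat{M}}(X)\cap\V(\alpha)$ for each $\mat{M}\in\mathcal{M}$. For the present purpose these equalities are extraneous: retaining only the clause $Y\modelcalMrs\alpha$ yields precisely the relational form of {\em(con-9)} isolated above. Hence the corollary follows at once.

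There is no genuine obstacle here, since the substantive work has already been carried out in Proposition~\ref{P:calMrs-strongly-finitariness}. The one point meriting a moment's attention is purely formal, namely checking that the existential phrasing furnished by the proposition (there exists a nonempty $Y\Subset X$) matches the set-theoretic phrasing of {\em(con-9)} (membership of $\alpha$ in the union $\bigcup\set{\CcalMrs(Y)}{Y\neq\varnothing~\text{and}~Y\Subset X}$); these are interchangeable by the very definition of union, so nothing beyond this translation is required.
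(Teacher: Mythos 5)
Your proposal is correct and matches the paper's (implicit) argument exactly: the corollary is stated without proof precisely because it is Proposition~\ref{P:calMrs-strongly-finitariness} with the essential-variable equalities dropped, which is all that (con-9) requires. Your careful translation between the relational phrasing and the union formulation in (con-9) is the only step needed, and you handle it correctly.
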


\section{Substitution}
According to Proposition~\ref{P:lukasiewicz-tarski}, any $\Cn_{\mat{M}}$ operator is structural. In terms of consequence relations, this means that, given a matrix $\mat{M}$, for any substitution $\sigma$,
\[
X\modelM\alpha~\Longrightarrow~\sigma(X)\modelM\sigma(\alpha).
\]

The structurality property can hardly be expected for $\modelMr$ or $\modelMrs$ relations. However, the following rule can be considered for these relations, although not without some restriction.
\begin{equation}\label{E:substitution:one}
X\modelMr\sigma(\alpha)~\Longrightarrow~X\modelMr\alpha,
\end{equation}
providing that $\V(X)\cap\V(\alpha)=\varnothing$. (In Section~\ref{section:five-two} this property is formulated as the \textit{rule of reverse substitution}.)

It is obvious that~\eqref{E:substitution:one} is true.
Indeed, the restriction `$\V(X)\cap\V(\alpha)=\varnothing$' allows us to reduce~\eqref{E:substitution:one} to the implication: for any valuation $v$ in $\mat{M}$, if $v$ validates $\sigma(\alpha)$, then $v\circ\sigma$ validates $\alpha$. On the other hand, if we drop the restriction, we face with the problem that when a valuation $v$ validates $X$ and its extension $w$ validates $\sigma(\alpha)$, the valuation $w\circ\sigma$, which validates $\alpha$, is not necessarily an extension of $v$.

\vskip 0.1in
It is also true that~\eqref{E:substitution:one} can be extended to the following:
\[
X\modelcalMr\sigma(\alpha)~\Longrightarrow~X\modelcalMr\alpha
\]
and
\[
X\modelcalMrs\sigma(\alpha)~\Longrightarrow~X\modelcalMrs\alpha,
\]
providing that $\V(X)\cap\V(\alpha)=\varnothing$.

The last implication can obviously be reduced to the case when $\mathcal{M}$ consists of a single matrix:
\begin{equation*}\label{E:substitution:two}
	X\modelMrs\sigma(\alpha)~\Longrightarrow~X\modelMrs\alpha,
\end{equation*}
where $\V(X)\cap\V(\alpha)=\varnothing$.

Indeed, according to~\eqref{E:two-conditions}, the last implication is equivalent to the following:
\[
X_{\mathbf{c}_{\mat{M}}}\modelMr\sigma(\alpha)~\Longleftrightarrow~X_{\mathbf{c}_{\mat{M}}}\modelMr\alpha.
\]
It remains to notice that $\V(X_{\mathbf{c}_{\mat{M}}})\cap\V(\alpha)=\varnothing$ whenever $\V(X)\cap\V(\alpha)=\varnothing$.

\section{Logical friendliness relation and beyond}\label{section:four}
So far, the specifics of objective language have not been used in our discussion to achieve positive results, except that the language was sentential. But to get negative results, we had to turn to specific languages.  However, specifying an objective language can also open up more possibilities for defining nonmonotonic consequence relations.

For the rest of this paper, we will use the language of Example~\ref{EX:one}, that is a language with the logical connectives $\land$, $\lor$, $\rightarrow$, $\neg$ and the constant $\top$. We denote this formal language by $\LanF$. Unspecified formulas of $\LanF$ we denote by letters $A$, $B$, etc., and sets of such formulas by letters $\Gamma$, $\Delta$, etc. We continue using the same notation for substitutions and valuations. Since all our matrices will be Heyting algebras with a greatest element $\one$, for each valuation $v$, we require the condition: $v[\top]=\one$.

\subsection{Logical friendliness} 
Our interest to nonmonotonic relation was inspired by~\cite{makinson2005a} and~\cite{makinson2007}, where the relation of \emph{logical friendliness} was introduced.\footnote{Makinson has proved that logical friendliness is strongly finitary with some additional property for monotonicity, which is however weaker than the one formulated in Proposition 5.1 of~\cite{mur2021}.} To define logical friendliness, we use the matrix $\booleTwo$ which is a two-element Boolean algebra with the filter $\{\one\}$; namely
\begin{equation}\label{E:friendliness-definition}
\Gamma\snakeF A~\define~\Gamma\models_{\booleTwo}^{r}A.
\end{equation}

Further, we define:
\begin{equation}\label{E:friendliness-restricted}
\Gamma\snakeFs A~\define~\Gamma\models_{\booleTwo}^{r\ast}A.
\end{equation}

We denote the operators corresponding to $\snakeF$ and $\snakeFs$ by $\CF$ and $\CFs$, respectively.

In view of Example~\ref{EX:one} and Example~\ref{EX:two}, we observe:
\begin{equation}
	\Cn_{\booleTwo}<\CF<\CFs.
\end{equation}

The following proposition later will play the role of a navigator, but it is convenient to discuss this property here. It reads that for defining logical friendliness, we can use any nontrivial Boolean algebra.
\begin{prop}\label{P:lemma-novigator}
Let $\alg{B}$ be a nontrivial Boolean algebra that is in combination with a logical filter $\{\one\}$ forms a logical matrix $\mat{B}$. Then
\begin{equation}\label{E:Boolean=friendlines}
	\Gamma\models_{\mat{B}}^{r}A~\Longleftrightarrow~\Gamma
	\models_{\booleTwo}^{r}A.
\end{equation}
\end{prop}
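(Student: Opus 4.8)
The plan is to prove the biconditional by exploiting two standard features of the variety of Boolean algebras. First, the two-element algebra $\booleTwo$ sits inside $\alg{B}$ as the subalgebra $\{\zer,\one\}$, so every $\booleTwo$-valuation is at the same time a $\mat{B}$-valuation all of whose values lie in $\{\zer,\one\}$, and there the two sets of operations agree. Second, since $\alg{B}$ is nontrivial, for every $b\in|\alg{B}|$ we have $b=\one$ if and only if $g(b)=\one$ for every homomorphism $g\colon\alg{B}\to\booleTwo$ onto the two-element algebra (enough such $g$ existing by the Boolean prime ideal theorem). Throughout I set $\Var_{0}:=\V(\Gamma)\cap\V(A)$ and $Q:=\V(A)\setminus\V(\Gamma)$, and I note at once that both are finite because $A$ is a single formula. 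As the designated set of each matrix is $\{\one\}$, validating a formula means evaluating it to $\one$.

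For the implication $\Gamma\models_{\mat{B}}^{r}A\Rightarrow\Gamma\models_{\booleTwo}^{r}A$ I would argue by contraposition. Assume $\Gamma\not\models_{\booleTwo}^{r}A$, witnessed by a $\booleTwo$-valuation $v_{0}$ with $\dom(v_{0})=\V(\Gamma)$ such that $v_{0}[\Gamma]\subseteq\{\one\}$ while no extension of $v_{0}$ to $\V(\Gamma\cup\{A\})$ sends $A$ to $\one$. Reading the values of $v_{0}$ inside the subalgebra $\{\zer,\one\}$ of $\alg{B}$ gives a $\mat{B}$-valuation that still validates $\Gamma$. If some $\mat{B}$-extension $w$ of it satisfied $w[A]=\one$, then composing with any fixed homomorphism $g\colon\alg{B}\to\booleTwo$ (one exists since $\alg{B}$ is nontrivial) would yield a $\booleTwo$-extension $g\circ w$ of $v_{0}$ with $(g\circ w)[A]=g(\one)=\one$, contradicting the choice of $v_{0}$. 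Hence $\Gamma\not\models_{\mat{B}}^{r}A$.

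The substantive direction is $\Gamma\models_{\booleTwo}^{r}A\Rightarrow\Gamma\models_{\mat{B}}^{r}A$. Fix a $\mat{B}$-valuation $v$ with $\dom(v)=\V(\Gamma)$ and $v[\Gamma]\subseteq\{\one\}$; I must produce a single $\mat{B}$-assignment $w_{Q}$ of the finitely many variables in $Q$ with $(v\cup w_{Q})[A]=\one$. For every homomorphism $g\colon\alg{B}\to\booleTwo$ the composite $g\circ v$ is a $\booleTwo$-valuation validating $\Gamma$, since $g(v[\gamma])=g(\one)=\one$ for each $\gamma\in\Gamma$; so by the hypothesis together with Lemma~\ref{L:one} its restriction $\rho_{g}:=(g\circ v)\!\upharpoonright\!\Var_{0}$ adopts $A$ in $\booleTwo$, i.e. extends over $Q$ to a $\booleTwo$-assignment satisfying $A$. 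Because $\Var_{0}$ is finite I may fix once and for all a choice function $T$ sending each such extendable $\rho\in\{\zer,\one\}^{\Var_{0}}$ to a witnessing $T(\rho)\in\{\zer,\one\}^{Q}$ (defined arbitrarily elsewhere). Each coordinate $\rho\mapsto T(\rho)(q)$ is then a Boolean function of finitely many bits, hence realized by a Boolean term $\tau_{q}$ in $|\Var_{0}|$ arguments; I set $w_{Q}(q):=\tau_{q}\big(v[p]:p\in\Var_{0}\big)$, which is an element of $\alg{B}$ as $\alg{B}$ is closed under its operations.

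It remains to verify $(v\cup w_{Q})[A]=\one$, which by the homomorphism criterion reduces to $g\big((v\cup w_{Q})[A]\big)=\one$ for every $g\colon\alg{B}\to\booleTwo$. Since $g$ is a homomorphism, $g\circ(v\cup w_{Q})$ coincides with $\rho_{g}$ on $\Var_{0}$ and, because $g(v[p])=\rho_{g}(p)\in\{\zer,\one\}$, sends each $q\in Q$ to $\tau_{q}(\rho_{g})=T(\rho_{g})(q)$; thus on $\V(A)$ it equals $\rho_{g}\cup T(\rho_{g})$, which satisfies $A$ by the choice of $T$. So every projection equals $\one$, whence $(v\cup w_{Q})[A]=\one$, as required. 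The main obstacle, and the reason this direction is delicate, is precisely the passage from the pointwise $\booleTwo$-witnesses $\rho_{g}\cup T(\rho_{g})$ to one $\alg{B}$-valued assignment $w_{Q}$: choosing a witness separately for each homomorphism need not produce anything definable in $\alg{B}$. Finiteness of $\Var_{0}$ is what rescues the argument, since it turns the choice into a genuine finite Boolean function that can be evaluated inside $\alg{B}$ through the term $\tau_{q}$.
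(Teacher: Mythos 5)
Your proof is correct, but on the substantive direction it takes a genuinely different route from the paper's, and it is the more careful of the two. Both arguments rest on the same representation-theoretic fact: the paper embeds $\alg{B}$ as a subdirect product into a power $\booleTwo^{I}$ and works with the projections $g_{i}$, while you work with homomorphisms $g\colon\alg{B}\to\booleTwo$ supplied by the prime ideal theorem; and the direction $\Gamma\models_{\mat{B}}^{r}A\Rightarrow\Gamma\models_{\booleTwo}^{r}A$ is handled essentially identically in both (you by contraposition, the paper directly): read a $\booleTwo$-valuation inside the subalgebra $\{\zer,\one\}$ of $\alg{B}$ and push any witnessing extension back down through one homomorphism. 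The difference is in the converse. There the paper extends the projected valuation $g_{i}\circ w$ to a $\booleTwo$-valuation $v^{\prime}$ validating $\Gamma\cup\{A\}$ and then defines the required $\alg{B}$-extension by the coordinatewise rule $(w^{\prime}[q])_{i}:=v^{\prime}[q]$; as written this controls only one coordinate of $w^{\prime}[A]$, and if instead it is carried out for every $i\in I$ with separately chosen extensions $v^{\prime}_{i}$, the resulting tuple lies in $\booleTwo^{I}$ but need not belong to the subalgebra $\alg{B}$ at all (a subdirect product is in general proper), nor need the separately chosen witnesses cohere. Your term construction supplies exactly the idea that closes this gap: realizing the choice function $T$ by Boolean terms $\tau_{q}$ and setting $w_{Q}(q):=\tau_{q}\bigl(v[p]:p\in\Var_{0}\bigr)$ keeps the witness inside $\alg{B}$ by closure under the operations, and, since homomorphisms commute with term functions, every $g$ automatically carries it to the $\booleTwo$-witness $T(\rho_{g})(q)$, so a single extension works for all projections simultaneously. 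What the paper's construction buys is brevity (and it would be sound if $\alg{B}$ were literally a full power $\booleTwo^{I}$); what yours buys is a complete argument for an arbitrary nontrivial $\alg{B}$ --- precisely the delicacy your final paragraph identifies, and in effect a repair of the paper's own proof of the hard direction.
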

\begin{proof}
	We recall that $\alg{B}$ (understood as an algebra, not matrix) is a subdirect product $\booleTwo^{I}$ (the Cartesian product of copies of $\booleTwo$), where $\booleTwo$ is also regarded here as an algebra. This means that $\alg{B}$ is embedded to $\booleTwo^{I}$ so that each projection, $g_i$, is an epimorphism. We also recall that each valuation in an algebra is a homomorphism from the corresponding formula algebra to the former.
	
	Now assume that $\Gamma\models_{\mat{B}}^{r}A$ and consider a valuation $v$ in $\booleTwo$ such that $v[\Gamma]\subseteq\{\one\}$ and $\dom(v)=\V(\Gamma)$.
	
	We define a valuation $\hat{v}$ in $\alg{B}$ as follows.
	\[
	\hat{v}[p]:=\begin{cases}
		\begin{array}{cl}
			\zer &\text{if $v[p]=\zer$},\\
			\one &\text{if $v[p]=\one$},~\text{for every $p\in\V(\Gamma)$}.
		\end{array}
	\end{cases}
	\]
	
	It is clear that $\hat{v}[\Gamma]\subseteq\{(\one)_{I}\}$. Therefore, by premise, there is an extension $w\ge\hat{v}$ such that
	$w[\Gamma\cup\{\alpha\}]\subseteq\{(\one)_{I}\}$. Now, $v^\ast:=g_{i}\circ w$ is a valuation in $\booleTwo$ such that $v^\ast\ge v$ and $v^\ast[\Gamma\cup\{A\}]\subseteq\{\one\}$.
	
	Next assume that $\Gamma
\models_{\booleTwo}A$ and that a valuation $w$ in $\alg{B}$ validates $\Gamma$, that is, $w[\Gamma]\subseteq\{(\one)_{I}\}$. Then, obviously, valuation $v:=g_{i}\circ w$ validates $\Gamma$ in $\booleTwo$, that is, $v[\Gamma]\subseteq\{\one\}$. Then there is an extension $v^{\prime}\ge v$ such that $v^{\prime}[\Gamma\cup\{A\}]\subseteq\{\one\}$. Now we define a valuation $w^\prime$ in $\alg{B}$ (regarded as a subalgebra of $\booleTwo^I$) according to the rule: $(w^{\prime}[p])_{i}:=v^{\prime}[p]$. It is clear that $w^{\prime}\ge w$ and $w^{\prime}[\Gamma\cup\{A\}]\subseteq\{(\one)_{I}\}$.
\end{proof}

\subsection{Beyond logical friendliness}\label{section:five-two}
We aim to make logical friendliness the starting point for getting more logical nonmonotonic consequences. This time we choose the method of proof theory.

Let $\Int$ and $\Cl$ denote the classes of intuitionistic and classical propositional tautologies, respectively. An \emph{intermediate logic} is a set $L$ of formulas, satisfying the following conditions: $\Int\subseteq L\subseteq\Cl$, $L$ is closed under uniform substitution and under modus ponens. The class of all intermediate logics is denoted by $\ExtInt$.

Let $L\in\ExtInt$. A finite sequence
\[
A_1, A_2, \ldots, A_n
\]
is called an $L$-\emph{derivation from a set} $\Gamma$ if for each $A_i$, either $A_i\in L$ or $A_i\in\Gamma$ or $A_i$ can be obtained by modus ponens from two preceding formulas of the sequence. In addition, we say that the sequence is an $L$-derivation of the last formula of the sequence, that is, of $A_n$.

Next, we define:
\[
\Gamma\vdashL{L}A~\define~\text{there is an $L$-derivation of $A$ from $\Gamma$}.
\]

Each relation $\vdashL{L}$ obviously is a monotonic consequence relations. We denote the corresponding consequence operator by $\Cn_L$.

We call a set $\Gamma$ a $\Cn_L$-\emph{theory} if $\Cn_L(\Gamma)=\Gamma$. (This concept will be employed later.)

\vskip 0.1in
We call any expression of the form $\Gamma\sequentL{}A$ a \textit{sequent}. 

Next, we provide the list of \textit{sequential} $L$-\textit{axioms} (or \textit{sL}-\emph{axioms} for short) and that of \textit{sequential} $L$-\textit{rules} (or \textit{sL}-\textit{rules} for short).

\begin{itemize}
	\item[\underline{s\textit{L}-\text{Axioms}}:]
	\item[axiom 1:] $\sequentL{}p$, for any $p\in\VarF$;\\
	\item[axiom 2:]  $\Gamma\sequentL{}\top$;\\
	\item[axiom 3:]  $\Gamma\sequentL{}\bigwedge\Delta$ if $\Delta\Subset\Gamma$ ($\bigwedge\Delta:=\top$ if $\Delta=\varnothing$);\\
	\item[axiom 4:] $\Gamma\sequentL{}A$ whenever $\Gamma\vdash_{L} A$.\\\\ 
	
	\item[\underline{s\textit{L}-\text{Rules}}:]
	\item[rule 1:] $\dfrac{\sequentL{}A}{\Gamma\sequentL{}A}$, providing that
	$\Var(\Gamma)\cap\Var(A)=\varnothing$;\\\\
	\item[rule 2:] $\dfrac{\Gamma, A\sequentL{}C
		~\text{and}~\Delta, B\sequentL{}C}{\Gamma\cup\Delta, A\lor B\sequentL{}C}$,
	providing that $\Var(\Gamma\cup\{A\})=\Var(\Delta\cup\{B\})$;\\\\
	\item[rule 3:]
	$
	\dfrac{\Gamma\sequentL{}\sigma(A)}{\Gamma\sequentL{}A}, 
	$
	providing that $\Var(\Gamma)\cap\Var(A)=\varnothing$; (Reverse substitution)\\\\
	\item[rule 4:]
	$
	\dfrac{\Gamma\sequentL{}A,~\text{and}~A\sequentL{} B}{\Gamma\sequentL{}B}, 
	$
	providing that either $\Var(\Gamma)\subseteq\Var(A)$ or\\ $\Var(\Gamma)\cap\Var(B)\subseteq\Var(A)\subseteq\Var(\Gamma)$; (Cut)\\\\
	\item[rule 5:]
	$
	\dfrac{\Gamma, A\sequentL{}B~\text{and}~C\sequentL{} A}{\Gamma, C\sequentL{}B},
	$
	providing that $\Var(C)\subseteq\Var(\Gamma\cup\{A\})$;\\ (Deductive replacement in antecedent)\\
	\item[rule 6:]
	$
	\dfrac{\Gamma\sequentL{}A~\text{and}~A\sequentL{} B}{\Gamma\sequentL{}B}$. (Deductive replacement in consequent)
\end{itemize}

A finite nonempty list
\[
\Gamma_1\sequentL{}A_1,\ldots,\Gamma_n\sequentL{}A_n
\]
is called an \textit{sL}-\textit{derivation} (of the last sequent on the list, i. $\Gamma_n\sequentL{}A_n$) if each $\Gamma_k\sequentL{}A_k$ is either a s\textit{L}-axiom or can be obtained from preceding sequents by one of the rules 1--6. We say that 
$\Gamma\sequentL{}A$ \textit{holds} in s$L$-calculus
if there is an s\textit{L}-derivation ending with $\Gamma\sequentL{}A$. 

This leads to the following definition: for each $l\in\ExtInt$,
\begin{equation}\label{E:sequentL-definition}
	\Gamma\VdashL{L}A~\define~\Gamma\sequentL{L}A~\text{holds in s$l$-calculus}.
\end{equation}

\vskip 0.1in
The motivation for formulating the s$L$-calculi is based on the following observation.
\[
\Gamma\snakeF A~\Longleftrightarrow~\Gamma\VdashL{\Cl}A~(\text{cf.~\cite{muravitsky2009}, theorems 4.5 and 4.8}).
\]

Reformulating the last equivalence, we obtain completeness theorem for the nonmonotonic consequence $\VdashL{\Cl}$:
\[
\Gamma\VdashL{\Cl}A~\Longleftrightarrow~\Gamma\models_{\booleTwo}^{r}A.
\]

This suggests that there may be similar completeness theorems for other $\VdashL{L}$ relations. We will come back to this later, but now we will discuss what is interesting about the $\VdashL{L}$ relations.

According to~~\cite{mur2021}, proposition 7.3, each relation $\VdashL{L}$ is reflexive and nonmonotonic. Further, for any nonempty set $\Gamma$ and any formula $A$ with $\V(\Gamma)\cap\V(A)=\varnothing$, if $\Gamma\VdashL{L}A$, then there is a nonempty set $\Delta\Subset\Gamma$ such that
$\Delta\VdashL{L}A$. To this, we add the following.
\begin{prop}
	Each $\VdashL{L}$ is logical relative to $\vdashL{L}$.
\end{prop}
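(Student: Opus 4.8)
The plan is to verify the two defining conditions of logicality in their relational form~\eqref{E:minimum-condition-for-logical-1} and~\eqref{E:minimum-condition-for-logical-2}, now for the pair consisting of the monotonic relation $\vdashL{L}$ and the nonmonotonic relation $\VdashL{L}$. That is, I would establish (i) that $\Gamma\vdashL{L}A$ implies $\Gamma\VdashL{L}A$, and (ii) that $\Gamma\VdashL{L}A$ together with $A\vdashL{L}B$ implies $\Gamma\VdashL{L}B$. Both facts should be read off directly from the s$L$-calculus, using~\eqref{E:sequentL-definition}, so that no semantic detour is needed.

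For~\eqref{E:minimum-condition-for-logical-1}, the key observation is that axiom~4 internalizes every $L$-derivation as a one-line s$L$-derivation. If $\Gamma\vdashL{L}A$, then $\Gamma\sequentL{}A$ is itself an s$L$-axiom, hence $\Gamma\sequentL{L}A$ holds in the s$L$-calculus, which by~\eqref{E:sequentL-definition} is exactly $\Gamma\VdashL{L}A$.

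For~\eqref{E:minimum-condition-for-logical-2}, I would chain two derivations using rule~6 (deductive replacement in consequent). Assume $\Gamma\VdashL{L}A$ and $A\vdashL{L}B$. The former supplies an s$L$-derivation ending in $\Gamma\sequentL{}A$. The latter, via axiom~4 applied to the singleton antecedent $\{A\}$, makes $A\sequentL{}B$ an s$L$-axiom. Appending this axiom to the derivation of $\Gamma\sequentL{}A$ and applying rule~6 yields $\Gamma\sequentL{}B$, whence $\Gamma\VdashL{L}B$.

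The argument is short precisely because the s$L$-apparatus was engineered so that axiom~4 absorbs the underlying monotonic consequence while rule~6 performs forward chaining on the consequent without any proviso, so there is no real obstacle here. The one point meriting care is the choice of rule: one should route~\eqref{E:minimum-condition-for-logical-2} through rule~6 rather than through cut (rule~4), since the variable-disjointness and variable-inclusion provisos attached to rules~1, 3, 4, and~5 are not automatically satisfied in the present situation, whereas rule~6 carries none.
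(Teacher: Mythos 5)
Your proposal is correct and follows essentially the same route as the paper: axiom~4 discharges condition~\eqref{E:minimum-condition-for-logical-1}, and rule~6 discharges condition~\eqref{E:minimum-condition-for-logical-2}. Your extra step of invoking axiom~4 with the singleton antecedent $\{A\}$ to obtain the sequent $A\sequentL{L}B$ before applying rule~6 merely makes explicit what the paper's terser proof leaves implicit, and your observation that rule~6 (rather than the cut rule~4, with its variable provisos) is the right tool matches the paper's choice exactly.
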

\begin{proof}
Indeed, suppose $\Gamma\vdashL{L}A$. Then, by axiom 4, $\Gamma\sequentL{L}A$ holds, that is, $\Gamma\VdashL{L}A$.

Next, assume that $\Gamma\VdashL{L}A$ and $A\vdashL{L}B$. The first assumption implies that there is an s$L$-derivation of the s$L$-sequent $\Gamma\sequentL{L}A$.
Then we apply rule 6. Thus there is an s$L$-derivation of the s$L$-sequent $\Gamma\sequentL{L}B$, that is, $\Gamma\VdashL{L}B$.
\end{proof}

Now let us return to the question of the possibility of the completeness theorem for each relation $\VdashL{L}$.

We recall that, in the terminology of \cite{citkin-muravitsky2022}, each $\vdashL{L}$ is a \emph{unital assertional abstract logic}.\footnote{For definitions, the reader is offered to consult~\cite{citkin-muravitsky2022}, chapter 6.} 

For a fixed $L\in\ExtInt$, we denote by $\Sigma_{L}$ the set of all $L$-theories. Then, given $D\in\Sigma_{L}$, we denote by $\LT{L}[D]$ the \emph{Lindenbaum-Tarski algebra relative to} $D$. Further, each $\LT{L}[D]$ is a homomorphic image of $\LT{L}[\Cn_{L}(\varnothing)]$ and, hence, is a Heyting algebra. Moreover, each such algebra can be regarded as a unital matrix when the logical filter consists of one element. In each $\LT{L}[D]$, the designated element is the unit of the algebra, that is, its greatest element.

In the case $L=\Cl$, each $\LT{\Cl}[D]$ is a Boolean algebra.
This circumstance will play an important role soon.

By proposition 6.3.5 of~\cite{citkin-muravitsky2022},
\begin{equation}
\Gamma\vdashL{L}A~\Longleftrightarrow~\Gamma\models_{\LT{L}[D]}A~\text{for every $D\in\Sigma_{L}$}.
\end{equation}

This allows us to say that each $\vdashL{L}$ is the $\mathcal{M}_{L}$-consequence, where \[\mathcal{M}_{L}:=\set{\LT{L}[D]}{D\in\Sigma_{L}}.
\]

 In the case $L=\Cl$, we have:
\[
\Gamma\vdashL{\Cl}A~\Longleftrightarrow~\Gamma\models_{\LT{\Cl}[D]}A~\text{for every $D\in\Sigma_{\Cl}$}.
\]

Now, when we move from $\mathcal{M}_{\Cl}$ consequence to 
$\mathcal{M}_{\Cl}$-r-consequence, in virtue of Proposition~\ref{P:lemma-novigator}, we obtain the conclusion: relation $\snakeF$ is the $\mathcal{M}_{\Cl}$-r-consequence. This suggests the following.

\begin{conjecture}
Each $\VdashL{L}$ is the $\mathcal{M}_{L}$-r-consequence.
\end{conjecture}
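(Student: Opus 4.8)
The plan is to establish the conjecture by proving the two inclusions separately, mirroring the template that already settles the case $L=\Cl$. Recall that for $L=\Cl$ the statement holds because two independent results happen to dovetail: the completeness theorem $\Gamma\snakeF A\Leftrightarrow\Gamma\VdashL{\Cl}A$ (cf.~\cite{muravitsky2009}, theorems 4.5 and 4.8) and the collapse $\Gamma\models_{\LT{\Cl}[D]}^{r}A\Leftrightarrow\Gamma\models_{\booleTwo}^{r}A$ supplied by Proposition~\ref{P:lemma-novigator}, which together identify $\snakeF$ with the $\mathcal{M}_{\Cl}$-r-consequence. For a general $L\in\ExtInt$ neither pillar is available off the shelf, so both directions must be argued directly.

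First I would prove soundness, i.e.\ $\Gamma\VdashL{L}A\Rightarrow\Gamma\models^{r}_{\mathcal{M}_{L}}A$, by induction on the length of an s$L$-derivation. By~\eqref{E:C_calMr} it suffices to verify, for each fixed $\LT{L}[D]$ with $D\in\Sigma_{L}$, that every s$L$-axiom is an $\LT{L}[D]$-r-consequence and that every s$L$-rule preserves $\LT{L}[D]$-r-consequence. Axiom~4 is exactly (log-1) applied to $\Cn_{L}=\Cn_{\mathcal{M}_{L}}$, while axioms~1--3 reduce to elementary facts about the designated unit. For the rules I would use the adoption characterization of Lemma~\ref{L:one}: rule~1 is immediate from its variable-disjointness proviso, since the empty restriction already adopts $A$; rule~3 is the reverse-substitution property proved in Section~\ref{section:four}. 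The delicate step is the inductive treatment of the cut and replacement rules~4--6: the existential flavour of $\models^{r}$ makes transitivity fail in general (Example~\ref{EX:one}), so the provisos on these rules are doing essential work, and the induction must invoke them exactly where Example~\ref{EX:one} would otherwise produce a counterexample.

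Second I would attack completeness, $\Gamma\models^{r}_{\mathcal{M}_{L}}A\Rightarrow\Gamma\VdashL{L}A$, by imitating the construction behind~\cite{muravitsky2009}. From the hypothesis one extracts, for each $D\in\Sigma_{L}$, information (via Lemma~\ref{L:one}) about which restricted valuations in $\LT{L}[D]$ adopt $\Gamma$ and which adopt $A$, and one tries to convert this uniformly into a sequence of s$L$-axioms and rule applications terminating in $\Gamma\sequentL{L}A$. The natural intermediate target is a derivation of a single finite sequent $\Delta\sequentL{L}A$ with $\Delta\Subset\Gamma$ and $\V(\Delta)\cap\V(A)=\V(\Gamma)\cap\V(A)$, matching the finitariness already recorded for $\VdashL{L}$ on the proof side; the remaining work is to synthesise that derivation from the adoption data.

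The hard part will be this completeness step, and the obstruction is precisely the disappearance of the Boolean subdirect representation. In the classical case Proposition~\ref{P:lemma-novigator} replaces the entire family $\mathcal{M}_{\Cl}$ by the single two-element matrix $\booleTwo$, because every $\LT{\Cl}[D]$ is a subdirect power of $\booleTwo$, so a derivation can be read off against one concrete algebra. For intermediate $L$ the algebras $\LT{L}[D]$ are Heyting and admit no common two-valued projection, so $\models^{r}_{\mathcal{M}_{L}}$ genuinely quantifies over infinitely many non-isomorphic matrices at once; moreover the semantic finitariness results (Propositions~\ref{P:Mr-very-strongly-finitary} and~\ref{P:calMrs-strongly-finitariness}) presuppose a \emph{finite} family of \emph{finite} matrices, whereas $\mathcal{M}_{L}$ is typically infinite with infinite members, so even the preliminary reduction to a finite sequent is not automatic. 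I therefore expect that a verbatim analogue of the $\Cl$-proof will not go through, and that the conjecture, if true, will require either a structural reduction of $\{\LT{L}[D]:D\in\Sigma_{L}\}$ to a manageable subfamily (e.g.\ the finite subdirectly irreducible Heyting algebras validating $L$) or a genuinely new completeness argument that builds s$L$-derivations from the adoption data of such a subfamily.
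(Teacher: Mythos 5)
The statement you were given is not proved in the paper at all: it is stated as an open conjecture, and the paper's only evidence for it is the case $L=\Cl$, obtained by combining the completeness theorem of~\cite{muravitsky2009} with Proposition~\ref{P:lemma-novigator}. Your proposal, read as a proof attempt, therefore has a genuine gap --- in fact two. First, in the soundness half ($\Gamma\VdashL{L}A\Rightarrow\Gamma\models^{r}_{\mathcal{M}_{L}}A$), the verification that the s$L$-rules preserve $\mathcal{M}_{L}$-r-consequence is announced rather than carried out: rule~2 (the disjunction rule) is never mentioned at all, and for rules~4--6 you say only that the provisos ``are doing essential work'' and must be invoked ``exactly where Example~\ref{EX:one} would otherwise produce a counterexample,'' which is a statement of the difficulty, not a resolution of it; since transitivity genuinely fails for $\models^{r}_{\mat{M}}$, these are precisely the cases where a proof could collapse. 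Second, and decisively, the completeness half is explicitly abandoned: you name a target (a derivation of a finite sequent $\Delta\sequentL{L}A$ with $\Delta\Subset\Gamma$ synthesised from adoption data via Lemma~\ref{L:one}), then conclude that a verbatim analogue of the classical argument ``will not go through'' and that either a structural reduction of $\set{\LT{L}[D]}{D\in\Sigma_{L}}$ or ``a genuinely new completeness argument'' would be required. A proof cannot end with a conditional promissory note.

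That said, your diagnosis of the obstruction is accurate and matches why the paper can only conjecture the statement: the classical case works because every $\LT{\Cl}[D]$ is a subdirect power of $\booleTwo$, so Proposition~\ref{P:lemma-novigator} collapses the whole family $\mathcal{M}_{\Cl}$ to the single finite matrix $\booleTwo$, against which the proof-theoretic side $\VdashL{\Cl}$ can be matched; for a proper intermediate logic no such collapse of the Heyting algebras $\LT{L}[D]$ is available, and the finitariness results in the paper (Propositions~\ref{P:Mr-very-strongly-finitary} and~\ref{P:calMrs-strongly-finitariness}) indeed presuppose a finite family of finite matrices, so even the reduction to a finite premise set is not free. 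In short: what you wrote is a reasonable research plan whose assessment of where the difficulty lies agrees with the paper's implicit reasons for leaving the statement open, but it does not prove the conjecture --- and neither does the paper.
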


Of course, from $\mathcal{M}_{L}$-r-consequence, we can move to $\mathcal{M}_{L}$-r$\ast$-consequence. Then, we define:
\[
\Gamma\VdashL{L^\ast}A~\define~\Gamma_{\mathbf{c}}\VdashL{L}A,
\]
where
\[
\Gamma_{\mathbf{c}}:=\set{B_{\mathbf{c}_{\LT{L}[X]}}}{B\in\Gamma}
\]
and each $B_{\mathbf{c}_{\LT{L}[X]}}$ is obtained from $B$ by replacement of each occurrence of a variable inessential in $\LT{L}[\Gamma]$ with $\top$. (We recall that each $\LT{L}[\Gamma]$ is a Heyting algebra.)

The following conjecture suggests itself.
\begin{conjecture}
	$\Gamma\VdashL{L^\ast}A$ if, and only if, $\Gamma\modelMrs A$ for every $\mat{M}\in\mathcal{M}_{L}$.
\end{conjecture}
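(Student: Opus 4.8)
The plan is to reduce the biconditional, via the two definitional bridges already available, to a statement comparing the single global replacement $\Gamma_{\mathbf{c}}$ with the matrix-by-matrix replacements $\setCM{\Gamma}$. First I would unfold the left-hand side: by definition $\Gamma\VdashL{L^\ast}A$ means $\Gamma_{\mathbf{c}}\VdashL{L}A$. Granting the preceding conjecture --- that each $\VdashL{L}$ is the $\mathcal{M}_{L}$-r-consequence --- this is equivalent to $\Gamma_{\mathbf{c}}\modelMr A$ for every $\mat{M}\in\mathcal{M}_{L}$. On the right-hand side, $\Gamma\modelMrs A$ for every $\mat{M}\in\mathcal{M}_{L}$ is, by~\eqref{E:two-conditions} applied in each matrix, equivalent to $\setCM{\Gamma}\modelMr A$ for every $\mat{M}\in\mathcal{M}_{L}$. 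Thus the whole statement collapses to the claim
\[
(\forall \mat{M}\in\mathcal{M}_{L})\ \Gamma_{\mathbf{c}}\modelMr A \quad\Longleftrightarrow\quad (\forall \mat{M}\in\mathcal{M}_{L})\ \setCM{\Gamma}\modelMr A,
\]
so everything hinges on controlling the discrepancy between the two families of formulas inside the scope of the matrix quantifier.

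Next I would isolate that discrepancy by analyzing essential variables across the family. Writing $F:=\LT{L}[\Cn_{L}(\varnothing)]$ for the free algebra, I would first prove the lemma that a variable is $\mathcal{M}_{L}$-essential in a formula exactly when it is $F$-essential: the easy direction uses $F\in\mathcal{M}_{L}$, and the converse lifts, along the epimorphism $F\twoheadrightarrow\LT{L}[D]$, any pair of valuations into $\LT{L}[D]$ witnessing essentiality to a pair into $F$. Since every $\LT{L}[D]$ is a homomorphic image of $F$, it distinguishes no more than $F$ does, so the $\mathcal{M}_{L}$-inessential variables are contained in the $\mat{M}$-inessential variables for each $\mat{M}$. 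Because $\LT{L}[\Gamma]=\LT{L}[\Cn_{L}(\Gamma)]$ is itself one of the matrices in $\mathcal{M}_{L}$, the set of variables frozen to $\top$ in $\Gamma_{\mathbf{c}}$ is exactly the $\LT{L}[\Gamma]$-inessential set; for a fixed $\mat{M}=\LT{L}[D]$, the variables that $\setCM{\Gamma}$ freezes are $\mat{M}$-inessential, hence by Proposition~\ref{P:valuations-on-essential-variables} their replacement by $\top$ is invisible to every valuation in $\mat{M}$. Consequently, for each matrix $\mat{M}$ the two formula sets differ, as far as $\mat{M}$-valuations are concerned, only on the variables that are $\LT{L}[\Gamma]$-inessential yet $\mat{M}$-essential.

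The main obstacle is precisely this residual set. When $D\supseteq\Cn_{L}(\Gamma)$ the matrix $\LT{L}[D]$ is a quotient of $\LT{L}[\Gamma]$, so every $\LT{L}[\Gamma]$-inessential variable is also $\mat{M}$-inessential and the residual set is empty; there $\Gamma_{\mathbf{c}}\modelMr A$ and $\setCM{\Gamma}\modelMr A$ are interderivable routinely through Lemma~\ref{L:one}. The difficulty concentrates on the matrices $\LT{L}[D]$ with $D\not\supseteq\Cn_{L}(\Gamma)$, where a variable may be pinned in $\LT{L}[\Gamma]$ (and therefore frozen to $\top$ in $\Gamma_{\mathbf{c}}$) while remaining genuinely active in $\LT{L}[D]$; freezing an $\mat{M}$-essential variable can alter the $\modelMr$-behaviour, so the per-matrix equivalence can break and one must argue globally. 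Here the natural tool is the weak monotonicity of $\mathcal{M}_{L}$-r$\ast$-consequence (Proposition~\ref{P:weak-monotonicity-for-calMrs}), used to reintroduce, without changing the essential variables shared with $A$, the variables that the global replacement has suppressed. I regard closing this gap uniformly over all theories $D$ as the crux on which the conjecture stands or falls, and as the reason the statement is posed as a conjecture rather than a theorem; the surrounding reductions and the essential-variable lemma are, by contrast, routine and depend only on the already-conjectured completeness of $\VdashL{L}$.
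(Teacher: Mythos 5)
There is no proof in the paper to compare against: this statement is posed there as an open conjecture, immediately after the (equally unproven) conjecture that $\VdashL{L}$ is the $\mathcal{M}_{L}$-r-consequence. So your proposal must stand on its own, and it does not. The definitional unfolding is correct --- the left side is $\Gamma_{\mathbf{c}}\VdashL{L}A$ by definition, and via~\eqref{E:two-conditions} the right side is $\setCM{\Gamma}\modelMr A$ for every $\mat{M}\in\mathcal{M}_{L}$ --- and your lifting of essentiality witnesses along the epimorphisms $\LT{L}[\Cn_{L}(\varnothing)]\twoheadrightarrow\LT{L}[D]$ is sound. But the result is doubly conditional (it grants the first conjecture) and, even then, is only a restatement of the problem: neither direction of the equivalence between the two universally quantified statements is established, and you say so yourself. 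A reduction plus an identification of the obstacle, with the obstacle left standing, is a gap, not a proof.

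Moreover, the one substantive claim you make beyond the reduction is false. You assert that for matrices $\mat{M}=\LT{L}[D]$ with $D\supseteq\Cn_{L}(\Gamma)$ the per-matrix equivalence $\Gamma_{\mathbf{c}}\modelMr A\Leftrightarrow\setCM{\Gamma}\modelMr A$ is ``routine.'' In that case the residual set you worry about is indeed empty, but the discrepancy in the opposite direction remains: $\setCM{\Gamma}$ freezes variables that are $\mat{M}$-inessential yet $\LT{L}[\Gamma]$-essential, which $\Gamma_{\mathbf{c}}$ keeps, and freezing $\mat{M}$-inessential variables is precisely what separates $\modelMr$ from $\modelMrs$ (Proposition~\ref{P:Mr-stronger-Mr*}, Example~\ref{EX:two}): it is invisible to values but not to the relation, since it shrinks the variable set on which the initial restricted valuation is pinned. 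Concretely, let $L=\Int$, $\Gamma=\{\neg\neg q\rightarrow q\}$, $A=q$. Since $\neg\neg r\rightarrow r\notin\Cn_{\Int}(\Gamma)$, the variable $q$ is $\LT{\Int}[\Gamma]$-essential in $\neg\neg q\rightarrow q$ (compare the valuations $q\mapsto[r]$ and $q\mapsto[\top]$), so $\Gamma_{\mathbf{c}}=\Gamma$. Now take any maximal consistent theory $D\supseteq\Cn_{\Int}(\Gamma)$, so that $\mat{M}:=\LT{\Int}[D]$ is isomorphic to $\booleTwo$ and $q$ is $\mat{M}$-inessential in $\neg\neg q\rightarrow q$. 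Then $\setCM{\Gamma}$ is variable-free and designated under every valuation, whence $\setCM{\Gamma}\models^{r}_{\mat{M}}q$; but $\Gamma_{\mathbf{c}}\not\models^{r}_{\mat{M}}q$, because the restriction $q\mapsto\zer$ designates $\neg\neg q\rightarrow q$ and admits no extension redefining $q$. (This does not refute the conjecture itself --- at $\mat{M}=\LT{\Int}[\Cn_{\Int}(\varnothing)]$ both sides of the global biconditional fail --- but it does refute your localization.) So the global, all-matrices argument is needed even on your ``easy'' matrices, and the conjecture remains exactly as open after your proposal as before it.
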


\section{What has been achieved?}
We started our discussion about the concept of nonmonotonic reasoning from a philosophical point of view. Popular belief says that time can render the final verdict.
But what is meant by saying this? We can suggest two possible answers.

The first is contained in the following quote from R. von Mises.

\vskip 0.1in
\begin{quote}
	I aim at the construction of a rational theory, based on the simplest possible concepts, one which, although admittedly inadequate to represent the complexity of the real processes, is able to reproduce satisfactorily some of their essential properties. (\cite{mises1981}, The inadequacy of theories)
\end{quote}

\vskip 0.1in
The second opinion is more difficult to express. 

The success of the proposed approach is often provided by unexplored space for its further development. The larger this space, the more questions left unanswered, the more tools available to answer these questions, the more likely it is that this approach will become the main focus of research. Such an approach could be termed \emph{open}, that is, open for further development.

This view of mine echoes another statement by von Mises.

\vskip 0.1in
\begin{quote}
	$[\ldots]$ the value of a concept is not gauged by its correspondence with some usual group of notions, but only by its usefulness for further scientific development $[\ldots]$ (\cite{mises1981}, Synthetic definitions)
\end{quote}

To be open in the above sense means not to be sterile in the understanding of G. Kreisel, who wrote,

\vskip 0.1in
\begin{quote}
Aristotle gives much attention to the matter of \emph{choice}
of abstractions. This indicates a shift of emphasis away from matters of principle (e.g. mere validity of analogies) to a focus (on, of course, valid cases) which provides safeguards against \emph{sterility} (not merely against straight error). (\cite{kreisel-1}, Choice of abstractions)
\end{quote}

\vskip 0.1in
Whether a (consistent) approach is open enough to avoid sterility can only be judged by time.

\vskip 0.1in
In a broader context, concerning the \textit{what} and \textit{how} questions, the difficulty of the task should not overshadow the more important level associated with the act of knowing. This thought is not mine, but a paraphrase of the thought of Thomas Aquinas, who wrote,
\begin{quote}
	When something is more difficult, it is not for that reason necessarily more worthwhile, but it must be more difficult in such a way, as also to be at a higher level of goodness. [Read: ``the act of knowing'' instead of ``goodness''] (Quoted from~\cite{pieper2009}, chapter II.)
\end{quote}


\end{document}